\newtheorem{thm}{Theorem}
\newtheorem{prop}{Proposition}
\newtheorem{lem}{Lemma}
\newtheorem{cor}{Corollary}
\begin{document}

\def\sh{\mbox{sh}}
\def\type{\mbox{\small type\,}}

\def\C{{\mathbb C}}
\def\N{{\mathbb N}}

\def\Z{{\mathbb Z}}
\def\R{{\mathbb R}}
\def\E{{\mathbb E}}
\def\P{{\mathbb P}}

\def\sgn{\mbox{sgn}}

\def\diag{\mbox{diag}}

\def\a{\alpha}
\def\n{\nu}
\def\m{\mu}

\def\la{{\lambda}}

\def\l{\lambda}
\def\g{\gamma}
\def\G{\Gamma}

\def\ep{\phi}

\def\tr{\mbox{{\rm tr} }}

\def\F{{\mathcal F}}
\def\Q{{\mathcal Q}}
\def\L{{\mathcal L}}

\def\T{{\mathcal T}}
\def\S{{\mathcal S}}

\title[Whittaker functions and related stochastic processes]{Whittaker functions and related stochastic processes}

\author{Neil O'Connell}
\address{Mathematics Institute, University of Warwick, Coventry CV4 7AL, UK}
\curraddr{}
\email{n.m.o-connell@warwick.ac.uk}
\thanks{}

\subjclass[2010]{Primary 60J65, 82B23; Secondary 06B15}

\keywords{}

\date{\today}

\dedicatory{}

\maketitle

\begin{abstract}  
We review some recent results on connections between Brownian motion,
Whittaker functions, random matrices and representation theory.
\end{abstract}

\section{Harish-Chandra formula, Duistermaat-Heckman measure and Gelfand-Tsetlin patterns}

Define $J_\lambda(x)=  h(\lambda)^{-1}\det(e^{\lambda_i x_j})$,
where $ h(\lambda)=\prod_{i<j}(\lambda_i-\lambda_j)$.  
For each $x$, $J_\lambda(x)$ is an analytic function of $\lambda$;
in particular, $J_0(x)=\left(\prod_{j=1}^{n-1} j!\right)  h(x)$.
The functions $J_\lambda(x)$ play a central role in random matrix theory.
For example, if $\Lambda$ and $X$ are Hermitian matrices with eigenvalues
given by $\lambda$ and $x$, respectively, then
\begin{equation}\label{iz}
\int_{U(n)}e^{\tr \Lambda U X U^*} dU=\frac{J_\lambda(x)}{J_0(x)},
\end{equation}
where the integral is with respect to normalised Haar measure on the unitary group.
This is known as the Harish-Chandra, or Itzykson-Zuber, formula.

Let $\beta=(\beta_t,\ t\ge 0)$ be a standard Brownian motion in $\R^n$ with drift $\lambda$.
Denote by $\P_x$ the law of $\beta$ started at $x$ and by $\E_x$ the corresponding
expectation.  Set $$\Omega =\{x\in\R^n:\ x_1>x_2>\cdots>x_n\},\qquad T=\inf\{t>0:\ \beta_t\notin\Omega\}.$$
For $\lambda,x\in\R^n$, write $\lambda(x)=\sum_i\lambda_i x_i$.  
\begin{prop} \label{sp}
For $x,\lambda\in\Omega$, $J_\lambda(x)=h(\lambda)^{-1} e^{\lambda(x)} \P_x(T=\infty)$.
\end{prop}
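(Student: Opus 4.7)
The plan is to identify $u(x):=e^{-\lambda(x)}\det(e^{\lambda_i x_j})$ with $\P_x(T=\infty)$; the stated formula then follows at once from $J_\lambda(x)=h(\lambda)^{-1}\det(e^{\lambda_i x_j})$. Expanding the determinant,
\[
u(x)=\sum_{\sigma\in S_n}\sgn(\sigma)\,g_\sigma(x),\qquad g_\sigma(x):=\prod_{j=1}^n e^{(\lambda_{\sigma(j)}-\lambda_j)x_j},
\]
and the strategy is to verify that $u$ is bounded, harmonic for the generator of $\beta$ on $\Omega$, vanishes on $\partial\Omega$, and has the right asymptotics so that a bounded-martingale argument pins it down as $\P_\cdot(T=\infty)$.

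First, a one-line differentiation gives $L g_\sigma=\tfrac12\bigl(\sum_j\lambda_{\sigma(j)}^2-\sum_j\lambda_j^2\bigr)g_\sigma=0$, where $L=\tfrac12\Delta+\lambda\cdot\nabla$ is the generator of $\beta$; hence $Lu=0$ on $\Omega$. Second, when $x_i=x_{i+1}$ the matrix $(e^{\lambda_k x_j})$ has two equal columns, so $u\equiv 0$ on $\partial\Omega$. Third, Abel summation
\[
\sum_j(\lambda_{\sigma(j)}-\lambda_j)x_j=\sum_{j=1}^{n-1}A_j^\sigma(x_j-x_{j+1}),\qquad A_j^\sigma:=\sum_{k=1}^j(\lambda_{\sigma(k)}-\lambda_k),
\]
combined with $A_j^\sigma\le 0$ (because $\lambda_1+\cdots+\lambda_j$ is the maximum $j$-element subset sum of $\{\lambda_i\}$) and $x_j>x_{j+1}$ on $\Omega$, yields $g_\sigma\le 1$ on $\overline\Omega$, so $|u|\le n!$ uniformly.

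Given these three properties, It\^o's formula shows that $M_t:=u(\beta_{t\wedge T})$ is a bounded $\P_x$-martingale, and dominated convergence gives $u(x)=\E_x M_\infty$. On $\{T<\infty\}$, $M_\infty=u(\beta_T)=0$ by the boundary vanishing. On $\{T=\infty\}$, the strong law $\beta_t/t\to\lambda$ $\P_x$-a.s., together with the rearrangement inequality $\sum_j\lambda_j\lambda_{\sigma(j)}<\sum_j\lambda_j^2$ valid for $\sigma\ne e$, forces $g_\sigma(\beta_t)\to 0$ a.s.\ for every $\sigma\ne e$, while $g_e\equiv 1$; hence $M_\infty=1$ a.s.\ on $\{T=\infty\}$. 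Therefore $u(x)=\P_x(T=\infty)$, which is the proposition.

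I expect the main obstacle to be the limiting behaviour of $u(\beta_t)$ on $\{T=\infty\}$: the harmonicity and the boundary vanishing are algebraic, but one has to ensure that ``drifting to infinity in direction $\lambda$'' is not disrupted by conditioning on never exiting $\Omega$. The uniform bound $|u|\le n!$ combined with the \emph{unconditional} strong law for $\beta$ handles this cleanly via dominated convergence, so one never has to analyse the conditioned process directly.
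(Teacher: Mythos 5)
Your proof is correct, and it takes a route that is genuinely different in emphasis from the one in the paper. The paper starts from $u(x)=\P_x(T=\infty)$, observes that $v(x)=e^{\lambda(x)}u(x)$ solves the eigenvalue problem $\Delta v=\sum_i\lambda_i^2 v$ on $\Omega$ with Dirichlet data and the normalization $\lim_{x\to\infty} e^{-\lambda(x)}v(x)=1$, and then concludes $v=\det(e^{\lambda_i x_j})$ by citing \emph{uniqueness} of that boundary value problem (outsourced to~\cite{bbo}). You go the other way: you start from the explicit determinant, expand it as $u=\sum_\sigma\sgn(\sigma)g_\sigma$, check by hand that $Lu=0$ on $\Omega$ and $u|_{\partial\Omega}=0$, and crucially establish the uniform bound $|u|\le n!$ on $\overline\Omega$ via Abel summation and the partial-sum inequalities $A_j^\sigma\le 0$. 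That bound is what lets you run a clean bounded-martingale argument: $M_t=u(\beta_{t\wedge T})$ converges a.s.\ and in $L^1$, $M_\infty=0$ on $\{T<\infty\}$ by boundary vanishing, and $M_\infty=1$ on $\{T=\infty\}$ by the unconditional strong law $\beta_t/t\to\lambda$ together with the strict rearrangement inequality $\sum_j\lambda_{\sigma(j)}\lambda_j<\sum_j\lambda_j^2$ for $\sigma\ne e$. In effect you are reproving the uniqueness statement that the paper cites, but as a byproduct of a direct probabilistic identification; the result is a more self-contained argument, at the cost of a little extra bookkeeping (the Abel-sum bound, the rearrangement inequality). One small remark: it is worth saying explicitly that $u$ is a globally defined smooth function on $\R^n$, not merely on $\Omega$, so that $u(\beta_{t\wedge T})$ makes sense up to and including the exit time; with that noted, the argument is airtight.
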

\begin{proof}
This is well known, see for example~\cite{bbo}.  
The function $u(x)=\P_x(T=\infty)$, $x\in\Omega$, satisfies
$\frac12 \Delta u + \lambda\cdot\nabla u =0$,
vanishes on the boundary of $\Omega$ and $\lim_{x\to\infty}u(x)=1$.
Here we write $x\to\infty$ to mean $x_i-x_{i+1}\to\infty$ for $i=1,\ldots,n-1$.
Hence $v(x)=e^{\lambda(x)}u(x)$ satisfies $ \Delta v=\sum_i\lambda_i^2v$,
vanishes on the boundary of $\Omega$ and
$\lim_{x\to\infty}e^{-\lambda(x)}v(x)=1$.  The function $\det(e^{\lambda_i x_j})$
also has these properties, so by uniqueness, 
$v(x)=\det(e^{\lambda_i x_j})$, as required.
\end{proof}

The Harish-Chandra formula
 has the following interpretation.  Pick $U$ at random according to the normalised
 Haar measure on $U(n)$ and let $\mu^x(dy)$
denote the law of the diagonal of the random matrix $UXU^*$.  Then the
integral becomes
$$\int_{U(n)}e^{\mbox{{\rm tr} }\Lambda U X U^*} dU=\int_{\R^n} e^{\lambda(y)} \mu^x(dy).$$
Setting $m^x(dy)=J_0(x)\mu^x(dy)$, we obtain
$$\int_{\R^n} e^{\lambda(y)} m^x(dy)=J_\lambda(x).$$
The measure $m^x$ is known as the Duistermaat-Heckman measure associated
with the point $x\in\Omega$.  It has the following properties, which are well-known.
The symmetric group $S_n$ acts naturally on $\R^n$ by permuting coordinates.  
The support of the measure $m^x$ is the convex hull of the set of images of $x$ 
under the action of $S_n$.  It has a piecewise polynomial density.  
This comes from the fact, which we will now explain, that the Duistermaat-Heckman
measure is the push-forward via an affine map of the Lebesgue measure on a 
higher dimensional polytope known as the Gelfand-Tsetlin polytope.

Let $x\in\Omega$ and denote by $GT(x)$ the polytope of Gelfand-Tsetlin
patterns with bottom row equal to $x$:
$$GT(x)=\{  P_{k,j},\ 1\le j\le k\le n:
 P_{k,j+1}\le P_{k-1,j}\le P_{k,j},\ 1\le j<k\le n,\ P_{n,\cdot}=x\}.$$
Define the {\em type} of a pattern $P$ to be the vector
\begin{equation}\label{type}
\type P = \left( P_{1,1}, P_{2,1}+P_{2,2}-P_{1,1} , \ldots , 
\sum_{j=1}^n P_{n,j}-\sum_{j=1}^{n-1} P_{n-1,j} \right) .
\end{equation}
Consider the map from $U(n)$ to $GT(x)$ defined by $U\mapsto P$ where: 
for each $1\le k\le n$,
$P_{k,\cdot}$ is the vector of eigenvalues of the $k^{th}$ principal minor of $UXU^*$.  
It is well-known (see, for example, \cite{bar} or \cite[Section 5.6]{ab} for a more general statement) 
that the push-forward of Haar measure under this map is the standard Euclidean measure on the 
polytope $GT(x)$.  Moreover, the diagonal of the matrix $UXU^*$ is equal to the 
type of the pattern $P$.  From this we obtain another integral representation for the
function $J_\lambda$ as
\begin{equation}\label{J}
J_\lambda(x)=\int_{GT(x)} e^{\lambda\cdot \type P} dP .
\end{equation}

\section{Whittaker functions}

Set
$H= \Delta -2\sum_{i=1}^{n-1}e^{-\alpha_i(x)}$,
where $\alpha_i=e_i-e_{i+1}$, $i=1,\ldots,n-1$.
Write $H=H^{(n)}$ for the moment; we will drop the superscript again later, whenever it is unnecessary.
For convenience we define $H^{(1)}=d^2/dx^2$ and $\psi^{(1)}_\lambda(x)=e^{\lambda x}$.
Following~\cite{gklo}, for $n\ge 2$ and $\theta\in\C$, define a kernel on 
$\R^ n\times\R^{ n-1}$ by
$$Q^{( n)}_\theta(x,y) = \exp\left( \theta
\left( \sum_{i=1}^ n x_i- \sum_{i=1}^{ n-1} y_i \right)
-\sum_{i=1}^{ n-1} \left( e^{y_i-x_i} + e^{x_{i+1}-y_i}\right) \right) .$$
Denote the corresponding integral operator by $\Q^{( n)}_\theta$,
defined on a suitable class of functions by
$$\Q^{( n)}_\theta f(x)=\int_{\R^{ n-1}} Q^{( n)}_\theta(x,y) f(y) dy.$$
The Whittaker functions $\psi^{(n)}_\lambda ,\lambda\in\C^n$ are defined recursively by
\begin{equation}\label{ior}
\psi^{( n)}_{\l_1,\ldots,\l_ n} = \Q^{( n)}_{\l_ n} \psi^{( n-1)}_{\l_1,\ldots,\l_{ n-1}}.
\end{equation}
As observed in~\cite{gklo}, the following intertwining relation holds:
\begin{equation}\label{i1}
(H^{( n)}-\theta^2)\circ \Q^{( n)}_\theta = \Q^{( n)}_\theta \circ H^{( n-1)}.
\end{equation}
This follows from the identity
$(H^{( n)}_x-\theta^2) Q^{( n)}_\theta(x,y) = H^{( n-1)}_y Q^{( n)}_\theta(x,y)$,
which is readily verified.  Combining (\ref{ior}) with the intertwining relation (\ref{i1}) 
yields the eigenvalue equation:
\begin{equation}\label{ee}
H^{( n)} \psi^{( n)}_\l = \left( \sum_{i=1}^ n \l_i^2\right)  \psi^{( n)}_\l.
\end{equation}
Let us now drop the superscripts and write $H=H^{(n)}$, $\psi_\l=\psi^{(n)}_\l$. 
Iterating (\ref{ior}) gives the following integral formula, due to Givental~\cite{givental,jk,gklo}
\begin{equation}\label{g}
\psi_\l(x) = \int_{\Gamma(x)} e^{\F_\l(  T)}  \prod_{k=1}^{n-1}\prod_{i=1}^k d  T_{k,i} ,
\end{equation}
where $\Gamma(x)$ denotes the set of real triangular arrays 
$(  T_{k,i},\ 1\le i\le k\le n)$ with $  T_{n,i}=x_i$, $1\le i\le n$,
and
$$\F_\l(  T)= \sum_{k=1}^n \l_k 
\left( \sum_{i=1}^k   T_{k,i}- \sum_{i=1}^{k-1}   T_{k-1,i} \right)
-\sum_{k=1}^{n-1}\sum_{i=1}^k \left( e^{  T_{k,i}-  T_{k+1,i}} + e^{  T_{k+1,i+1}-  T_{k,i}}\right).$$

Now, it is shown in~\cite{boc} that, for each $\lambda\in\Omega$, the equation
$H f=\sum_i\lambda_i^2 f$ has a unique solution $f=f_\lambda$ such that $e^{-\lambda(x)}f_\lambda(x)$ 
is bounded and $\lim_{x\to+\infty} e^{-\lambda(x)}f_\lambda(x)=1$, where we write $x\to+\infty$ to mean
$\alpha_i(x)=x_i-x_{i+1}\to+\infty$ for each $i$.  Moreover, by Feynman-Kac,
\begin{equation}\label{fk-f}
f_\lambda(x)=e^{\lambda(x)} \E_x \exp\left(-\sum_{i=1}^{n-1}\int_0^\infty e^{-\alpha_i(\beta_s)} ds\right),
\end{equation}
where $\beta_s$ is a Brownian motion in $\R^n$ with drift $\lambda$ as in the previous section.
The relation between the functions $f_\lambda$ and the Whittaker functions $\psi_\lambda$ is thus determined by
the following proposition.
\begin{prop} For $\lambda\in\Omega$,
\begin{equation}\label{lim}
\lim_{x\to+\infty} e^{-\l(x)} \psi_\l (x) =  \prod_{i<j} \G(\l_i-\l_j).
\end{equation}
\end{prop}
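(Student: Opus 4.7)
The plan is to establish both the pointwise bound
\begin{equation*}
0<e^{-\lambda(x)}\psi^{(n)}_\lambda(x)\le\prod_{1\le i<j\le n}\Gamma(\lambda_i-\lambda_j),\qquad x\in\R^n,
\end{equation*}
and the limit (\ref{lim}) simultaneously by induction on $n$. The key observation is that the upper bound produced by the inductive step matches the putative limit exactly, so the usual analytic obstacle — a uniform-in-$x$ bound on the Whittaker function needed for dominated convergence — is absorbed into a single algebraic identity. The base case $n=1$ is immediate, since $\psi^{(1)}_\lambda(x)=e^{\lambda x}$ makes the normalised function identically $1$ while the product is empty.

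For the inductive step, write $\lambda'=(\lambda_1,\ldots,\lambda_{n-1})$ and $x'=(x_1,\ldots,x_{n-1})$. Unfolding the recursion (\ref{ior}), inserting $e^{\lambda'(y)}e^{-\lambda'(y)}$ under the integral, and substituting $z_i=y_i-x_i$ yields, after a short algebraic simplification,
\begin{equation*}
e^{-\lambda(x)}\psi^{(n)}_\lambda(x)=\int_{\R^{n-1}}\prod_{i=1}^{n-1}\exp\!\bigl((\lambda_i-\lambda_n)z_i-e^{z_i}-e^{x_{i+1}-x_i-z_i}\bigr)\cdot e^{-\lambda'(x'+z)}\psi^{(n-1)}_{\lambda'}(x'+z)\,dz.
\end{equation*}
Since $e^{x_{i+1}-x_i-z_i}\ge 0$, dropping this term from the exponent only enlarges the integrand. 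Bounding the remaining last factor by the inductive hypothesis and using that the integral then decouples, the substitution $u=e^z$ evaluates each one-dimensional piece as
\begin{equation*}
\int_{\R}e^{(\lambda_i-\lambda_n)z-e^z}\,dz=\Gamma(\lambda_i-\lambda_n),
\end{equation*}
finite because $\lambda_i>\lambda_n$ for $i<n$. Multiplying these factors by the inductive product $\prod_{1\le i<j\le n-1}\Gamma(\lambda_i-\lambda_j)$ produces exactly $\prod_{1\le i<j\le n}\Gamma(\lambda_i-\lambda_j)$, yielding the required inductive bound.

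The limit then follows by dominated convergence with the very same dominating function: for each fixed $z$, as $x\to+\infty$ one has $e^{x_{i+1}-x_i-z_i}\to 0$ and $x'+z\to+\infty$ in $\R^{n-1}$, so the inductive limit identifies the pointwise limit of the last factor as $\prod_{1\le i<j\le n-1}\Gamma(\lambda_i-\lambda_j)$. Integrating term by term yields the same product as above, completing the inductive step. Without strengthening the inductive hypothesis to include the uniform bound, one would be forced into a separate tail estimate for $\psi^{(n-1)}_{\lambda'}$ — for instance, extracted from Givental's representation (\ref{g}) — but with the strengthened hypothesis everything reduces to a transparent algebraic recursion.
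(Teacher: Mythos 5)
Your proof is correct and takes essentially the same route as the paper: induction on $n$ via the recursion (\ref{ior}), a change of variables $y_i\mapsto x_i+y_i$, a strengthened inductive hypothesis giving the uniform bound $e^{-\lambda(x)}\psi^{(n)}_\lambda(x)\le\prod_{i<j}\Gamma(\lambda_i-\lambda_j)$, and dominated convergence for the limit. If anything, your version is marginally cleaner in stating the bound for all $x\in\R^n$ (which is what the inductive step actually requires, since $x'+z$ ranges over all of $\R^{n-1}$), whereas the paper records it only for $x\in\Omega$.
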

\begin{proof} We prove this by induction on $n$ using the recursion (\ref{ior}). 
Write $\psi_\lambda=\psi_\lambda^{(n)}$ as before, setting $\psi_\lambda^{(1)}(x)=e^{\lambda x}$.
Then $e^{-\lambda(x)}\psi^{(1)}_\lambda(x) =1$ and, for $n\ge 2$,
\begin{align*}
e^{-\lambda(x)}\psi^{(n)}_\lambda(x) =
\int_{\R^{n-1}} & \exp\left(-\sum_{i=1}^n \lambda_i x_i + \lambda_n \left(\sum_{i=1}^ n x_i- \sum_{i=1}^{ n-1} y_i \right) 
-\sum_{i=1}^{ n-1} \left( e^{y_i-x_i} + e^{x_{i+1}-y_i}\right) \right)\\
& \qquad \times  \psi^{(n-1)}_{\l_1,\ldots,\l_{n-1}}(y_1,\ldots,y_{n-1}) dy_1\ldots dy_{n-1}\\
= \int_{\R^{n-1}} & e^{\sum_{i=1}^{n-1} (\l_i-\l_n)y_i} \exp\left( -\sum_{i=1}^{n-1} e^{y_i} - \sum_{i=1}^{n-1} e^{x_{i+1}-x_i-y_i}\right)\\
& \times e^{-\sum_{i=1}^{n-1} \l_i(x_i+y_i)} \psi^{(n-1)}_{\l_1,\ldots,\l_{n-1}}(x_1+y_1,\ldots,x_{n-1}+y_{n-1}) dy_1\ldots dy_{n-1} .\\
\end{align*}
By induction, we immediately conclude that, for each $n$, if $x,\lambda\in\Omega$ then 
$e^{-\lambda(x)}\psi^{(n)}_\lambda(x)\le  \prod_{i<j} \G(\l_i-\l_j)$.  Here we are using
\begin{align*}
\int_{\R^{n-1}} e^{\sum_{i=1}^{n-1} (\l_i-\l_n)y_i} & \exp\left( -\sum_{i=1}^{n-1} e^{y_i} - \sum_{i=1}^{n-1} e^{x_{i+1}-x_i-y_i}\right)
dy_1\ldots dy_{n-1} \\
& \le 
\int_{\R^{n-1}} e^{\sum_{i=1}^{n-1} (\l_i-\l_n)y_i} \exp\left( -\sum_{i=1}^{n-1} e^{y_i} \right)
dy_1\ldots dy_{n-1}  = \prod_{i=1}^{n-1} \Gamma(\lambda_i-\lambda_n).
\end{align*}
It follows, again by induction
and using the dominated convergence theorem, that (\ref{lim}) holds for $\l\in\Omega$.
\end{proof}

\begin{cor} For $\lambda\in\Omega$,
\begin{equation}\label{fk}
\psi_\lambda(x)=\prod_{i<j} \G(\lambda_i-\lambda_j) e^{\lambda(x)}
\E_x \exp\left(-\sum_{i=1}^{n-1}\int_0^\infty e^{-\alpha_i(\beta_s)} ds\right).
\end{equation}
\end{cor}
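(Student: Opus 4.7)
The plan is to deduce the corollary from the uniqueness statement cited from \cite{boc} just before the previous proposition, using the Whittaker eigenvalue equation together with the asymptotic that was just established.

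First I would set $g_\lambda(x) = \psi_\lambda(x) / \prod_{i<j}\Gamma(\lambda_i-\lambda_j)$. By the eigenvalue equation (\ref{ee}), $g_\lambda$ satisfies $H g_\lambda = \sum_i \lambda_i^2\, g_\lambda$, the same PDE as $f_\lambda$. By the proposition just proved,
$$\lim_{x\to+\infty} e^{-\lambda(x)} g_\lambda(x) = 1,$$
which matches the normalization of $f_\lambda$.

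Next I would verify boundedness of $e^{-\lambda(x)} g_\lambda(x)$, as required by the uniqueness criterion from \cite{boc}. The inductive estimate in the preceding proof already gives $e^{-\lambda(x)}\psi_\lambda(x) \le \prod_{i<j}\Gamma(\lambda_i-\lambda_j)$; applied to $g_\lambda$, this reads $e^{-\lambda(x)} g_\lambda(x) \le 1$. (If the uniqueness from \cite{boc} is formulated on all of $\R^n$ rather than on $\Omega$, the same inductive argument — where the inner estimate $\exp(-\sum e^{x_{i+1}-x_i-y_i})\le 1$ holds regardless of whether $x\in\Omega$ — propagates the bound to the required domain; this is the one technical point that requires a moment of care, and is where any real work lies.)

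Having verified that $g_\lambda$ satisfies exactly the hypotheses characterizing $f_\lambda$ uniquely, the uniqueness statement from \cite{boc} gives $g_\lambda = f_\lambda$, i.e.
$$\psi_\lambda(x) = \prod_{i<j}\Gamma(\lambda_i-\lambda_j)\, f_\lambda(x).$$
Substituting the Feynman-Kac representation (\ref{fk-f}) for $f_\lambda$ yields (\ref{fk}), completing the proof. The whole argument is really a short soft one: the preceding proposition was the work, and the corollary is just the combination of (\ref{ee}), (\ref{fk-f}), and uniqueness.
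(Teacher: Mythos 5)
Your proof is correct and matches the paper's intended reasoning exactly: the corollary is stated without proof, being immediate from the uniqueness characterization quoted from \cite{boc}, the eigenvalue equation \eqref{ee}, the Feynman-Kac formula \eqref{fk-f}, and the limit of Proposition 2. Your observation about the domain of the boundedness hypothesis is apt, and the bound does indeed hold on all of $\R^n$ as you argue (in fact the induction in the proof of Proposition 2 already requires the bound at arbitrary real arguments, since $(x_1+y_1,\ldots,x_{n-1}+y_{n-1})$ ranges over all of $\R^{n-1}$).
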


\begin{cor} For $x,\lambda \in\Omega$,
$$J_\lambda(x)=\lim_{\beta\to\infty} \beta^{-n(n-1)/2}\psi_{\lambda/\beta}(\beta x).$$
\end{cor}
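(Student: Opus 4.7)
The strategy is to rescale the Givental integral (\ref{g}) for $\psi_{\lambda/\beta}(\beta x)$, identify the pointwise limit of the integrand as $\beta\to\infty$, and apply dominated convergence, matching the result against the representation (\ref{J}) of $J_\lambda$.

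First I substitute $T_{k,i}=\beta P_{k,i}$ into (\ref{g}) applied to $\psi_{\lambda/\beta}(\beta x)$. The boundary condition $T_{n,i}=\beta x_i$ becomes $P_{n,i}=x_i$, so the integration domain rescales to $\Gamma(x)$, and the Jacobian contributes $\beta^{n(n-1)/2}$. In $\F_{\lambda/\beta}(\beta P)$ the factors of $\beta$ in the linear part cancel to produce $\lambda\cdot\type P$ exactly, while the interaction terms become $\exp(\beta(P_{k,i}-P_{k+1,i}))$ and $\exp(\beta(P_{k+1,i+1}-P_{k,i}))$. Writing $g_\beta(P)$ for the resulting integrand, this yields
\begin{equation*}
\beta^{-n(n-1)/2}\psi_{\lambda/\beta}(\beta x)=\int_{\Gamma(x)} g_\beta(P)\,dP,\qquad g_\beta(P)=e^{\lambda\cdot\type P}\exp\Bigl(-\sum_{k=1}^{n-1}\sum_{i=1}^k\bigl[e^{\beta(P_{k,i}-P_{k+1,i})}+e^{\beta(P_{k+1,i+1}-P_{k,i})}\bigr]\Bigr).
\end{equation*}
On the interior of $GT(x)$ every difference $P_{k,i}-P_{k+1,i}$ and $P_{k+1,i+1}-P_{k,i}$ is strictly negative, so $g_\beta(P)\to e^{\lambda\cdot\type P}$; strictly outside $GT(x)$ at least one such difference is positive, so $g_\beta(P)\to 0$. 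Hence $g_\beta(P)\to e^{\lambda\cdot\type P}\mathbf{1}_{GT(x)}(P)$ almost everywhere, and the limiting integral equals $J_\lambda(x)$ by (\ref{J}).

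The main obstacle will be producing an integrable dominating function on the unbounded domain $\Gamma(x)\cong\R^{n(n-1)/2}$. I plan to use the elementary inequality $e^{\beta a}\ge e^a\mathbf{1}_{a\ge 0}$, valid for all $\beta\ge 1$ and $a\in\R$, to deduce the $\beta$-uniform majoration
\begin{equation*}
g_\beta(P)\le h(P):=e^{\lambda\cdot\type P}\exp\Bigl(-\sum_{k,i}\bigl[e^{P_{k,i}-P_{k+1,i}}\mathbf{1}_{P_{k,i}\ge P_{k+1,i}}+e^{P_{k+1,i+1}-P_{k,i}}\mathbf{1}_{P_{k+1,i+1}\ge P_{k,i}}\bigr]\Bigr).
\end{equation*}
Comparing $h$ to the $\beta=1$ Givental integrand $g_1$, each term present in $g_1$ but suppressed in $h$ has the form $e^a$ with $a<0$, hence is at most $1$; since there are $n(n-1)$ such terms, $h(P)\le e^{n(n-1)}g_1(P)$. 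Because $\int_{\Gamma(x)} g_1(P)\,dP=\psi_\lambda(x)<\infty$ by (\ref{g}), the function $h$ is integrable, and dominated convergence completes the argument.
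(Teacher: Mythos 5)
Your argument is correct, but it takes a genuinely different route from the paper's. The paper proves this corollary in two lines: by Proposition~\ref{sp}, the claim is equivalent to $\lim_\beta \beta^{-n(n-1)/2}\psi_{\lambda/\beta}(\beta x)=h(\lambda)^{-1}e^{\lambda(x)}\P_x(T=\infty)$, and this is deduced from the Feynman--Kac representation~\eqref{fk} by Brownian re-scaling (one absorbs $\beta^{-n(n-1)/2}$ into the product $\prod_{i<j}\Gamma((\lambda_i-\lambda_j)/\beta)\sim \beta^{n(n-1)/2}/h(\lambda)$, rescales time and space in the Brownian expectation, and observes that the exponential functional tends to the indicator of the event $\{T=\infty\}$). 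Your proof instead works directly at the level of the two integral representations, rescaling Givental's formula~\eqref{g}, computing the almost-everywhere pointwise limit of the integrand, and producing an explicit $\beta$-uniform dominating function to invoke dominated convergence against~\eqref{J}. Both are sound: you have correctly counted the $n(n-1)/2$ free variables for the Jacobian, verified that the linear term in $\F_{\lambda/\beta}(\beta P)$ collapses to $\lambda\cdot\type P$, identified the a.e.\ limit with the indicator of $GT(x)$ (the boundary being Lebesgue-null), and the domination $g_\beta\le h\le e^{n(n-1)}g_1$ via $e^{\beta a}\ge e^a\mathbf{1}_{a\ge0}$ for $\beta\ge1$ is a clean and valid bound, with $\int g_1=\psi_\lambda(x)<\infty$ for $\lambda\in\Omega$. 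The paper's route is shorter because it piggybacks on the probabilistic machinery already set up (Proposition~\ref{sp} and~\eqref{fk}), and the delicate interchange of limit and expectation is left implicit; yours is more elementary and self-contained, and it makes the structural parallel between~\eqref{g} and~\eqref{J} (which the paper explicitly remarks upon but does not exploit here) do the actual work. One could note that your approach is in fact the direct analytic realization of the degeneration $g\to g_0$ described just after the corollary in the text.
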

\begin{proof}
By Proposition~\ref{sp}, the statement is equivalent to
$$\lim_{\beta\to\infty} \beta^{-n(n-1)/2}\psi_{\lambda/\beta}(\beta x)
=  h(\lambda)^{-1} e^{\lambda(x)} \P_x(T=\infty).$$
This follows directly from \eqref{fk} by Brownian re-scaling.
\end{proof}

As shown in~\cite{boc}, the function $\psi_\lambda$, which can be defined by \eqref{fk}, is a class-one Whittaker function, 
as defined by Jacquet~\cite{ja} and Hashizume~\cite{h}.  In the notation of the paper~\cite{boc} we are 
taking $\Pi= \{\a_i/2,\ i=1,\ldots,n-1\}$, $m(2\alpha)=0$, $|\eta_\alpha|^2=1$ and 
$\psi_\nu(x)=2^q k_\nu(x)$ where $q=n(n-1)/2$.  In the paper~\cite{GLO}, the relationship between Givental 
integral formula and a recursive integral formula due to Stade~\cite{st} based on Jacquet's definition 
(see also \cite{is}) is described.

Givental's integral formula (\ref{g}) has a very similar structure to the formula~\eqref{J}.
Indeed, if we define the type of an array $(  T_{k,i},\ 1\le i\le k\le n)$ to be the vector
$$\type T = \left(T_{1,1}, T_{2,1}+T_{2,2}-T_{1,1} , \ldots , 
\sum_{j=1}^n T_{n,j}-\sum_{j=1}^{n-1} T_{n-1,j} \right) ,$$
and a measure
$$g(dT)=\prod_{k=1}^{n-1}\prod_{i=1}^k e^{-e^{  T_{k,i}-  T_{k+1,i}}}
e^{- e^{  T_{k+1,i+1}-  T_{k,i}}} d  T_{k,i}  = e^{\F_0(T)} \prod_{k=1}^{n-1}\prod_{i=1}^k d  T_{k,i} ,$$
then
$$\psi_\l(x) = \int_{\Gamma(x)} e^{\l\cdot \type T} g(dT).$$
On the other hand, if we replace the functions $e^{-e^{x-y}}$ in the reference
measure $g$ by the indicator functions $1_{x<y}$ to get a new reference measure
$$g_0(dT)=\prod_{k=1}^{n-1}\prod_{i=1}^k 1_{ T_{k,i}<  T_{k+1,i}}
1_{T_{k+1,i+1}< T_{k,i}} ,$$
then \eqref{J} can be written as
$$J_\l(x) = \int_{\Gamma(x)} e^{\l\cdot \type T} g_0(dT).$$

We note the following.
If $\lambda\in\iota\R^ n$ then $\overline{\psi_\lambda(x)}=\psi_{-\lambda}(x)$;
if $\lambda\in\iota\R^ n$ and $\nu\in\R^ n$, then $|\psi_{\lambda+\nu}(x)|\le \psi_\nu(x)$.
For each $x\in\R^ n$, $\psi_\lambda(x)$ is an entire, symmetric function of 
$\lambda\in\C^ n$~\cite{GLO,h,kl}.  There is a Plancherel 
theorem~\cite{wallach,an,GLO,kl} which states that the integral transform
\begin{equation}\label{u}
\hat f(\lambda)=\int_{\R^n} f(x)\psi_\lambda(x)dx
\end{equation}
is an isometry from $L_2(\R^n,dx)$ onto $L^{sym}_2(\iota\R^n,s_n(\lambda)d\lambda)$,
where $L_2^{sym}$ is the space of $L_2$
functions which are symmetric in their variables, $\iota=\sqrt{-1}$ and 
 $s_n(\lambda)d\l$ is the {\em Sklyanin measure} defined by
\begin{equation}\label{sklyanin}
s_n(\l)=\frac1{(2\pi \iota)^n n!} \prod_{j\ne k} \Gamma(\l_j-\l_k)^{-1}.
\end{equation}

For $x,\mu\in\R^n$, denote by $\sigma^x_\mu$ the probability measure on 
the set of real triangular arrays $(T_{k,i})_{1\le i\le k\le n}$ defined by 
$$\int f d\sigma^x_\mu = \psi_\mu(x)^{-1} 
 \int_{\Gamma(x)} f(T) 
e^{\F_\mu(T)}  \prod_{k=1}^{n-1}\prod_{i=1}^k dT_{k,i}.$$

Define a probability measure $\gamma^x_\mu$ by
 $$\int_{\R^n} e^{\lambda\cdot y} \gamma^x_\mu(dy) = \frac{\psi_{\mu+\lambda}(x)}{\psi_\mu(x)},
 \qquad \lambda\in\C^n.$$
 The probability measure $\gamma^x=\gamma^x_0$ is the analogue of the 
 (normalised) Duistermaat-Heckman measure in this setting. 
 The integral operator $K$ with kernel 
 $K(x,dy)=\psi_0(x) \gamma^x(dy)$ satisfies the intertwining relation $HK=K\Delta$.
 We can write $K(x,dy)= k(x,y) \rho_x(dy)$, where $k$ is a smooth kernel from 
 $\R^n$ to $\R^n_x=\{y\in\R^n:\ \sum_i y_i = \sum_i x_i\}$ and $\rho_x$ denotes the
 Euclidean measure on $\R^n_x$.  
 For $n=2$, $$k(x,y)=\exp(-e^{x_2-y_1}-e^{y_1-x_1})$$ and, for $n=3$, 
$$k(x,y)=\psi_0^{(2)}(a,b)=2K_0(2e^{(b-a)/2})$$
 where $$e^{-a}=e^{x_3-y_1-y_2}+e^{-x_1},\qquad e^b=e^{y_1}+e^{y_2}+e^{y_1+y_2-x_2}+e^{x_2},$$
 and $K_0$ denotes the Macdonald function with index $0$.

\section{Interpretation of $\gamma^x$ in terms of Brownian motion}

A reduced decomposition of an element $ w\in S_n$ is a minimal
expression of $w$ as a product of adjacent transpositions, that is,
$ w=s_{i_1}\ldots s_{i_r}$, where $s_i$ denotes the transposition 
$(i,i+1)$.  We will also refer to the word ${\bf i}=i_1i_2\ldots i_r$ as a reduced
decomposition.  By definition, any reduced decomposition has the same 
length $l(w)$, defined to be the length of $ w$.  There is a unique longest element
in $S_n$, namely the permutation
$$ w_0=\left(\begin{array}{cccc} 1 & 2 & \cdots & N \\ N& N-1 & \cdots & 1\end{array}\right) .$$
Its length is $n(n-1)/2$, as can be seen by taking the reduced decomposition
$${\bf i}=1\ 21\ 321\ \ldots\ n\ n-1\ \ldots 21.$$
The symmetric group acts on $\R^n$ by permutation of coordinates, and as
such is an example of a finite reflection group.  It is generated by the hyperplane
reflections $s_i=s_{\alpha_i}$, $i=1,\ldots,n-1$, defined for $x\in\R^n$ by
 $$s_i x=x-\alpha_i(x)\alpha_i,$$
 where $\alpha_i=e_i-e_{i+1}$.  Note that $s_i$ corresponds to the 
adjacent transposition $(i,i+1)$.

For a continuous path $\eta:(0,\infty)\to\R^n$, define $T_i=T_{\alpha_i}$ by
$$ T_i \eta(t)=\eta(t)+\left(\log\int_0^t e^{-\alpha_i(\eta(s))} ds\right) 
\alpha_i,\qquad  t> 0.$$  
Let $w=s_{i_1}\cdots s_{i_r}$ be a reduced decomposition.  
Then $$ T_w:=  T_{i_r}\cdots   T_{i_1}$$
depends only on $w$, not on the chosen decomposition~\cite{bbo}.

We now introduce a probability measure $\mathbb P$ under which $\eta$ 
is a Brownian motion in $\R^n$ with a drift $\mu$ and $\eta(0)=0$.  
In this setting, a very special role is played by the transform $T^{(n)}=T_{w_0}$.  
In the following we use the fact that this is well-defined for each $n$.
Write $\eta=(\eta^1,\ldots,\eta^n)$.
For each $k\le n$, set
$$( T_{k,1},\ldots, T_{k,k})= T^{(k)}(\eta^1,\ldots,\eta^k).$$

The evolution of the triangular array $   T_{k,j},\ 1\le j\le k \le n$
is given recursively as follows: $d   T_{1,1}=d\eta^1$ and, for $k\ge 2$,
\begin{align}\label{T-ev}
d   T_{k,1} &= d   T_{k-1,1} + e^{   T_{k,2}-   T_{k-1,1}} dt \nonumber \\
d   T_{k,2} &= d   T_{k-1,2} + \left( e^{   T_{k,3}-   T_{k-1,2}}-e^{   T_{k,2}-   T_{k-1,1}} \right) dt \nonumber \\
&\ \vdots \nonumber \\
d   T_{k,k-1} &= d   T_{k-1,k-1} + \left( e^{   T_{k,k}-   T_{k-1,k-1}}-e^{   T_{k,k-1}-   T_{k-1,k-2}} \right) dt \nonumber \\
d   T_{k,k} &= d\eta^k - e^{   T_{k,k}-   T_{k-1,k-1}} dt .
\end{align}
The process, which is clearly Markov, contains a number of projections which
are also Markov.  For example, setting $\xi_k=T_{k,k}$, we have, for $k\le n$,
$$d   \xi_k = d\eta^k - e^{  \xi_k-   \xi_{k-1}} dt .$$
This defines a simple interacting particle system on the real line, which has
very nice properties.  For example, in the coordinates $\sum_i\xi_i$ and
$\xi_{i+1}-\xi_{i}$, $1\le i\le n-1$, it has a product form invariant measure,
that is, a product measure which is invariant.

A remarkable fact is that each row in the pattern $T_{k,j}$ is a Markov process
with respect to its own filtration.  
This gives an interpretation of the measures $\gamma^x_\mu$ and $\sigma^x_\mu$
defined in the previous section.
\begin{thm}~\cite{noc}\label{main}
$T_{w_0}\eta(t),\ t>0$ is a diffusion process in $\R^n$ with infinitesimal generator 
$$\L_\mu = \frac12 \psi_\mu^{-1} \left(H-\sum_{i=1}^n\mu_i^2\right) \psi_\mu 
= \frac12 \Delta + \nabla\log \psi_\mu\cdot\nabla .$$
For each $t>0$, the conditional law of $\{T_{k,j}(t),\ 1\le j\le k\le n\}$,
given $\{T_{w_0}\eta(s),\ s\le t;\ T_{w_0}\eta(t)=x\}$, is $\sigma^x_\mu$
and the conditional law of $\eta(t)$, given
$\{T_{w_0}\eta(s),\ s\le t;\ T_{w_0}\eta(t)=x\}$, is $\gamma^x_\mu$.
The law of $T_{w_0}\eta(t)$ is given by
$$\nu^{\mu}_t(dx)=e^{-\sum_i \mu_i^2 t/2}  \psi_\mu(x)\theta_t(x)dx,$$ 
where 
\begin{equation}\label{theta}
\theta_t(x)=\int_{\iota \R^n}  \psi_{-\l} (x) e^{\sum_i \l_i^2 t/2} s_n(\l) d\l .
\end{equation}
\end{thm}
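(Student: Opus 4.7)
The plan is to combine It\^o calculus on the SDE (\ref{T-ev}) with a Markov intertwining (Rogers--Pitman) argument, and then identify the marginal law by solving the Kolmogorov forward equation with the help of the Plancherel theorem (\ref{u})--(\ref{sklyanin}). Applying It\^o to (\ref{T-ev}) produces an explicit second-order generator $\mathcal{G}$ for the full triangular array $T(t)=(T_{k,j}(t))_{1\le j\le k\le n}$ on $\R^{n(n+1)/2}$, with drifts built from the nearest-neighbour exponentials and Brownian components inherited row by row from $\eta^1,\ldots,\eta^n$. A crucial preliminary observation is that summing (\ref{T-ev}) across row $k$ telescopes to $\sum_j T_{k,j}(t)-\sum_j T_{k-1,j}(t)=\eta^k(t)$, so $\type T(t)=\eta(t)$ identically. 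Consequently, once the conditional law of $T(t)$ given the bottom row and the past of $T_{w_0}\eta$ is identified as $\sigma^x_\mu$, the conditional law of $\eta(t)=\type T(t)$ is automatically the push-forward of $\sigma^x_\mu$ under $\type$; by Givental's formula (\ref{g}) and the identity $\F_\mu(T)+\lambda\cdot\type T=\F_{\mu+\lambda}(T)$, this push-forward has moment generating function $\psi_{\mu+\lambda}(x)/\psi_\mu(x)$, i.e.\ equals $\gamma^x_\mu$. So the second and third claims of the theorem reduce to identifying $\sigma^x_\mu$.

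The heart of the proof is a Rogers--Pitman intertwining with Markov kernel $\Lambda(x,dT)=\sigma^x_\mu(dT)$, supported on patterns with bottom row $x$. I would verify the operator identity $\Lambda\mathcal{G}f=\L_\mu\Lambda f$ on smooth test functions $f$ on pattern space, using the explicit density $\psi_\mu(x)^{-1}e^{\F_\mu(T)}$ of $\sigma^x_\mu$, integration by parts up each row of the pattern, and iterated application of the one-step intertwining (\ref{i1}) combined with the eigenvalue equation (\ref{ee}) to collapse $H^{(n)}$ into the Doob transform $\L_\mu=\tfrac12\psi_\mu^{-1}(H-\sum\mu_i^2)\psi_\mu$ acting on the bottom row. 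The Rogers--Pitman criterion then delivers the first claim of the theorem, namely that $T_{w_0}\eta$ is a diffusion with generator $\L_\mu$, together with the conditional-law identifications of $\sigma^x_\mu$ and $\gamma^x_\mu$ from the previous step.

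For the marginal law, $\theta_t$ from (\ref{theta}) satisfies the heat equation $\partial_t\theta_t=\tfrac12 H\theta_t$, as one checks by differentiating under the integral and using $H\psi_{-\lambda}=(\sum\lambda_i^2)\psi_{-\lambda}$. Combined with the identity $\L_\mu^*(\psi_\mu g)=\psi_\mu(\tfrac12 H-\tfrac12\sum\mu_i^2)g$, valid because $H$ is formally self-adjoint with respect to Lebesgue measure, a direct computation shows that $\rho_t(x):=e^{-t\sum\mu_i^2/2}\psi_\mu(x)\theta_t(x)$ solves the Kolmogorov forward equation $\partial_t\rho_t=\L_\mu^*\rho_t$ for the $\L_\mu$-diffusion; identifying $\rho_t$ with $\nu^\mu_t$ then amounts to verifying the correct entrance behaviour as $t\to 0^+$, which, since $T_{w_0}\eta$ is only defined for $t>0$ and $\eta(0)=0$, requires taking a Plancherel limit using the asymptotics of $\psi_\mu$ at infinity. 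The hardest step of the whole argument is the intertwining $\Lambda\mathcal{G}=\L_\mu\Lambda$: although (\ref{i1}) gives the right row-by-row ingredient, implementing it requires a careful induction on $n$, integration by parts across the array $\Gamma(x)$, and control of exponential boundary contributions so that the second-order part of $\mathcal{G}$ on the high-dimensional pattern space collapses onto the Laplacian of the bottom row; the entrance-law analysis for $\nu^\mu_t$ is similarly delicate due to the logarithmic blow-up of $T_{w_0}\eta$ at $t=0$.
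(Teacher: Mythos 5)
Your outline has the right skeleton and matches the structure of the proof in the cited reference \cite{noc}: compute the generator $\mathcal{G}$ of the full triangular array from the SDE (\ref{T-ev}), observe that the rows telescope so that $\type T(t)=\eta(t)$ and hence the $\gamma^x_\mu$ claim is a corollary of the $\sigma^x_\mu$ claim via $\F_\mu(T)+\lambda\cdot\type T=\F_{\mu+\lambda}(T)$, establish a Rogers--Pitman intertwining $\Lambda\mathcal{G}=\L_\mu\Lambda$ with $\Lambda(x,dT)=\sigma^x_\mu(dT)$ supported on $\Gamma(x)$, and compute the marginal law by a forward-equation/Whittaker--Plancherel argument. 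Your verification that $\rho_t=e^{-t\sum\mu_i^2/2}\psi_\mu\theta_t$ solves $\partial_t\rho_t=\L_\mu^*\rho_t$ is correct, using formal self-adjointness of $H$ and $H\psi_{-\lambda}=(\sum\lambda_i^2)\psi_{-\lambda}$.

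There is, however, a point where your framing misplaces the real difficulty. You present the entrance-law analysis as a problem only for pinning down the marginal $\nu^\mu_t$, after Rogers--Pitman has already delivered the Markov property and the conditional laws. But the Rogers--Pitman criterion does not apply unconditionally given the intertwining alone: it requires a \emph{compatible initial condition}, i.e.\ that at some fixed $t_0>0$ the conditional law of the whole array $T(t_0)$ given $T_{w_0}\eta(t_0)=x$ is already $\sigma^x_\mu$. That compatibility statement \emph{is} one of the claims of the theorem, so invoking Rogers--Pitman without establishing it first is circular. Since $\eta(0)=0$ and the array degenerates as $t\to 0^+$ (components diverge logarithmically), you cannot simply impose the compatible law at $t=0$. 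What is needed is an independent computation of the joint law of $(T(t),T_{w_0}\eta(t))$ at a single fixed time $t>0$, for instance by writing $T(t)$ explicitly as a functional of the Brownian path on $[0,t]$, removing the drift by Girsanov (which produces precisely the factor $e^{\mu\cdot\type T(t)-|\mu|^2t/2}=e^{\mu\cdot\eta(t)-|\mu|^2t/2}$ and reduces the problem to drift zero), and then identifying the zero-drift conditional law as $\sigma^x_0$ with marginal $\psi_0(x)\theta_t(x)dx$; only after this single-time identification is in hand does the intertwining propagate it forward in $t$ and deliver the Markov property of $T_{w_0}\eta$ in its own filtration. So the entrance-law computation and the Rogers--Pitman step are coupled, not sequential as your plan suggests. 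Beyond this, the intertwining $\Lambda\mathcal{G}=\L_\mu\Lambda$ itself is asserted but not carried out; it requires more than iterating (\ref{i1}) and (\ref{ee}) formally, because $\mathcal{G}$ is a degenerate operator on $\R^{n(n+1)/2}$ (the noise is only $n$-dimensional, with $d\langle T_{k,j},T_{k',j'}\rangle=\delta_{jj'}dt$) and the integration by parts over $\Gamma(x)$ must produce $\tfrac12\Delta$ in the bottom-row variable with no surviving boundary or mixed second-order terms.
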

In the case $n=2$, this is equivalent to a theorem of Matsumoto and Yor~\cite{my}.

Write $\L=\L_0$ and $\nu_t=\nu^0_t$.  The diffusion with generator $\L$ is the
analogue of Dyson's Brownian motion in this setting and the measures $\nu_t$
and $\theta_t$ (the latter requires normalization) are the analogues of the Gaussian
unitary and Gaussian orthogonal ensembles, respectively.
The diffusion with generator $\L_\mu$ was introduced in~\cite{boc}.  When 
$\mu\in\overline{\Omega}$, it can be interpreted as a Brownian motion in $\R^n$
killed according to the potential $\sum_i e^{x_{i+1}-x_i}$ and then
conditioned to survive forever~\cite{k1,k2}.  The path-transformation $T_{w_0}$
is closely related to the geometric (lifting of the) RSK correspondence introduced by 
A.N. Kirillov~\cite{kirillov} and studied further by Noumi and Yamada~\cite{ny}.  
A discrete-time version of the above theorem, which works directly in the setting 
of the geometric RSK correspondence, is given in~\cite{cosz}.  
In the discrete-time setting the Whittaker functions continue to play a central role. 
See also~\cite{bc,bcr,ch,gnss,osz,ow} for further related developments.  

\section{Application to random polymers}

The following model was introduced in~\cite{oy2}.
The environment is given by a sequence $B_1,B_2,\ldots$ independent standard 1-dim Brownian motions.
For up/right paths $\phi \equiv \{0<t_1<\ldots < t_{N-1}< t\}$ (as shown in Figure 1), define
$$E(\phi)=B_1(t_1)+B_2(t_2)-B_2(t_1)+\cdots+B_N(t)-B_N(t_{N-1}),$$
$$P(d\phi)=Z^n_t(\beta)^{-1} e^{\beta E(\phi)} d\phi,
\qquad Z^n_t(\beta)= \int e^{\beta E(\phi)} d\phi .$$

\begin{figure}
\setlength{\unitlength}{.5cm}
\begin{small}

\begin{center}

\begin{picture}(12,7)(-1,-1)

\put(0,0){\vector(1,0){10}}
\put(0,0){\vector(0,1){6}}

\put(9,-1){$t$}
\put(7,-1){$t_{N-1}$}
\put(5.5,-1){$t_{N-2}$}
\put(4,-1){$t_3$}
\put(2.5,-1){$t_2$}
\put(1.5,-1){$t_1$}

\put(-1,-.2){$1$}
\put(-1,.8){$2$}
\put(-1,1.8){$3$}
\put(-1.5,3.8){$N-1$}
\put(-1,4.8){$N$}

\put(1.5,0){\dashbox{.1}(0,1)}
\put(2.5,1){\dashbox{.1}(0,1)}
\put(4,2){\dashbox{.1}(0,1)}
\put(5.5,3.5){\dashbox{.1}(0,.5)}
\put(7,4){\dashbox{.1}(0,1)}

\put(1.5,-.2){\line(0,1){.4}}
\put(2.5,-.2){\line(0,1){.4}}
\put(4,-.2){\line(0,1){.4}}
\put(5.5,-.2){\line(0,1){.4}}
\put(7,-.2){\line(0,1){.4}}
\put(9,-.2){\line(0,1){.4}}

\put(-.2,0){\line(1,0){.4}}
\put(-.2,1){\line(1,0){.4}}
\put(-.2,2){\line(1,0){.4}}
\put(-.2,4){\line(1,0){.4}}
\put(-.2,5){\line(1,0){.4}}

\linethickness{.7mm}

\put(0,0){\line(1,0){1.5}}
\put(1.5,1){\line(1,0){1}}
\put(2.5,2){\line(1,0){1.5}}
\put(4,3){\line(1,0){.3}}
\put(5.5,4){\line(1,0){1.5}}
\put(7,5){\line(1,0){2}}

\end{picture}
\end{center}

\caption{An up/right path $\phi \equiv \{0<t_1<\ldots < t_{n-1}< t\}$.}

\end{small}
\end{figure}
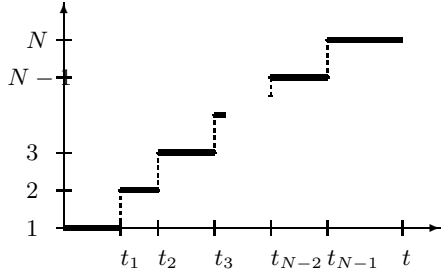

Set $X^n_1(t)=\log Z^n_t$ and, for $k=2,\ldots,n$,
$$X^n_1(t)+\cdots+X^n_k(t)=\log\int e^{E(\phi_1)+\cdots+E(\phi_k)}d\phi_1\ldots d\phi_k,$$
where the integral is over non-intersecting paths $\phi_1,\ldots,\phi_k$ from
$(0,1),\ldots,(0,k)$ to $(t,n-k+1),\ldots,(t,n)$.

Let $\eta=(B_n,\ldots,B_1)$.  Then $X=T_{w_0}\eta$ and the following holds.

\begin{thm}\cite{noc}\label{qtl}
The process $X(t), t>0$ is a diffusion in $\R^n$ with infinitesimal generator $\L$.
The distribution of $X(t)$ is given by $\nu_t$.  For $s>0$,
$$Ee^{- s Z^n_t } =
  \int s^{-\sum \l_i} \prod_{i}\Gamma(\l_i)^n e^{\frac12\sum_i\l_i^2 t} s_n(\l) d\l ,$$
where the integral is along (upwards) vertical lines with $\Re\l_i>0$ for all $i$.
\end{thm}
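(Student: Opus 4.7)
Setting $\eta=(B_n,B_{n-1},\dots,B_1)$, under $\P$ this is a standard Brownian motion in $\R^n$ with zero drift, started at $0$. The first task is to verify that $X(t)=T_{w_0}\eta(t)$ for all $t>0$; equivalently, each partial sum $X_1^n(t)+\dots+X_k^n(t)$ equals the corresponding partial sum of the top row of the triangular array produced by applying $T_{w_0}$ to $\eta$. This is a classical identity for the continuum geometric RSK correspondence, proved by induction on $n$ by iterating the elementary transforms in a reduced decomposition of $w_0$; a discrete-time analogue appears in~\cite{cosz}. Once $X=T_{w_0}\eta$ is established, Theorem~\ref{main} applied with $\mu=0$ immediately yields that $X$ is a diffusion with generator $\L=\L_0$ and that $X(t)$ has law $\nu_t(dx)=\psi_0(x)\theta_t(x)\,dx$.

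For the Laplace transform, I would write
$$E\,e^{-sZ^n_t}=\int_{\R^n} e^{-se^{x_1}}\psi_0(x)\theta_t(x)\,dx,$$
substitute the spectral representation~\eqref{theta} for $\theta_t$, and exchange the order of integration. The swap is justified by the uniform bounds $|\psi_{\nu+\iota\lambda}(x)|\le\psi_\nu(x)$ on imaginary shifts, the super-exponential decay of $e^{-se^{x_1}}$ as $x_1\to+\infty$, and the rapid decay of $s_n(\lambda)$ along vertical lines. This reduces the computation to the single Whittaker integral
$$\int_{\R^n} e^{-se^{x_1}}\psi_0(x)\psi_{-\lambda}(x)\,dx = s^{-\sum_i\lambda_i}\prod_{i=1}^n\Gamma(\lambda_i)^n.$$
With this in hand, one shifts the contour from $\iota\R^n$ to vertical lines with $\Re\lambda_i>0$, to the right of the poles of $\Gamma(\lambda_i)$ at the nonpositive integers; the Gaussian factor $e^{\frac12\sum_i\lambda_i^2 t}$ ensures convergence on the shifted contour, and the Sklyanin density absorbs any zeros of $\Gamma(\lambda_j-\lambda_k)^{-1}$ that might otherwise obstruct the shift.

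The displayed Whittaker integral identity is the heart of the proof and, I expect, the main obstacle. I would prove it by induction on $n$ using the Givental recursion~\eqref{ior} applied to both $\psi_0$ and $\psi_{-\lambda}$, first integrating out the $(n-1)$-dimensional row variables introduced by the two copies of $\Q^{(n)}$ and then performing the remaining one-dimensional integration in $x_1$ or $x_n$ as appropriate. The emergence of the extra factor of $\Gamma(\lambda_i)$---which upgrades the induction hypothesis $\prod_i\Gamma(\lambda_i)^{n-1}$ to $\prod_i\Gamma(\lambda_i)^n$---comes from the interaction of $e^{-se^{x_1}}$ with the nested exponential kernels in the two stacked Givental integrals. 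The base case $n=1$ is the direct computation $\int_\R e^{-se^x}e^{-\lambda x}\,dx=s^\lambda\Gamma(-\lambda)$, reconciled with the stated form by the $\lambda\to-\lambda$ symmetry of the Sklyanin integrand. An alternative would be to invoke Stade's recursive integral formula~\cite{st}, which is essentially the same identity in a different guise.
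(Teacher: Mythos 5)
The paper states this theorem with a citation to~\cite{noc} and gives no in-text proof; the sentence immediately preceding the theorem records $X=T_{w_0}\eta$, and combined with Theorem~\ref{main} at $\mu=0$ the first two assertions (diffusion with generator $\L$, marginal law $\nu_t$) are immediate, exactly as you observe.  Reducing the Laplace transform to the integral $\int e^{-se^{x_1}}\psi_0\psi_{-\lambda}\,dx$ against the spectral representation of $\theta_t$ is also the right move.

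However, the displayed Whittaker integral is not correct as written.  Your own $n=1$ computation gives $\int e^{-se^{x}}\psi_0\psi_{-\lambda}\,dx=s^{\lambda}\Gamma(-\lambda)$, and the general identity is $\int_{\R^n}e^{-se^{x_1}}\psi_0(x)\psi_{-\lambda}(x)\,dx=s^{\sum_i\lambda_i}\prod_i\Gamma(-\lambda_i)^n$, not $s^{-\sum\lambda_i}\prod\Gamma(\lambda_i)^n$.  You should state it that way, then change variables $\lambda\to-\lambda$ in the outer integral (using the invariance of $s_n$ and of the Gaussian factor), and only then shift to $\Re\lambda_i>0$.  Relatedly, for $\lambda\in\iota\R^n$ the inner integral fails to converge absolutely (already $\int_\R e^{-se^{x}}\,dx=\infty$), so the contour shift must come \emph{before} the Fubini exchange, not after as your order of operations suggests.

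The more serious gap is in the proposed proof of the key integral.  If you substitute the Givental recursion into both $\psi_0^{(n)}$ and $\psi_{-\lambda}^{(n)}$ and try to integrate out $x$, then the $x_1$ integral (carrying the weight $e^{-se^{x_1}}$ and the terms $e^{y_1-x_1},e^{y'_1-x_1}$) and each intermediate $x_j$ integral ($1<j<n$, carrying both $e^{-x_j}$ and $e^{x_j}$ terms coming from the two stacked kernels) are Bessel-type integrals, not Gamma integrals; only the $x_n$ coordinate cleanly produces a $\Gamma(-\lambda_n)$.  The result does not collapse to an $(n-1)$-dimensional instance of the same formula, so the induction as you sketch it does not close.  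This identity is a Bump--Stade formula (equivalently, an eigenvalue relation for the Baxter $Q$-operator of~\cite{GLO}), and the proof in~\cite{noc} invokes that result rather than re-deriving it from Givental.  Your closing remark that one could "invoke Stade's recursive integral formula" is in fact the correct route and should be the main argument rather than an afterthought.
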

The free energy for this model is given by~\cite{oy2,moc}
$$\lim_{n\to\infty} \frac1n\log Z^n_n= \inf_{t>0}[ t - \Psi(t)],$$
almost surely, where $\Psi(z)=\Gamma'(z)/\Gamma(z)$.
The conjectured KPZ scaling behaviour for the fluctuations of $\log Z^n_n$
was (essentially) established by Sepp\"al\"ainen and Valk\'o~\cite{sv}; more recently,
Borodin, Corwin and Ferrari~\cite{bc,bcf} have proved the full KPZ universality 
conjecture for this model, namely that $\log Z^n_n$, suitably centered
and rescaled, converges in law to the Tracy-Widom $F_2$ distribution of 
random matrix theory.  See also~\cite{spohn}.

\section{Reduced double Bruhat cells and their parameterisations}

 The Weyl group associated with $GL(n)$
is the symmetric group $S_n$.  Each element $w\in S_n$ has a representative
$\bar w\in GL(n)$ defined as follows.  Denote the standard generators for
$\mathfrak{gl}_n$ by $h_i$, $e_i$ and $f_i$.  For example, for $n=3$,
$$h_1=\begin{pmatrix}
1& 0 &0\\
0 & 0 & 0 \\
0 & 0 & 0 \end{pmatrix},\qquad 
h_2 =\begin{pmatrix}
0& 0 &0\\
0 & 1 & 0 \\
0 & 0 & 0 \end{pmatrix},\qquad 
h_3 = \begin{pmatrix}
0& 0 &0\\
0 & 0 & 0 \\
0 & 0 & 1 \end{pmatrix},
$$
$$e_1=\begin{pmatrix}
0& 1 &0\\
0 & 0 & 0 \\
0 & 0 & 0 \end{pmatrix},\quad 
e_2 =\begin{pmatrix}
0& 0 &0\\
0 & 0 & 1 \\
0 & 0 & 0 \end{pmatrix},\quad f_1=\begin{pmatrix}
0& 0 &0\\
1 & 0 & 0 \\
0 & 0 & 0 \end{pmatrix},\quad 
f_2 =\begin{pmatrix}
0& 0 &0\\
0 & 0 & 0 \\
0 & 1 & 0 \end{pmatrix}.
$$

For adjacent transpositions $s_i=(i,i+1)$, define 
$$\bar{s}_i=\exp(-e_i)\exp(f_i)\exp(-e_i)=(I-e_i)(I+f_i)(I-e_i).$$  
In other words, 
$\bar s_i=\varphi_i\begin{pmatrix}0&-1\\1&0\end{pmatrix}$
where $\varphi_i$ is the natural embedding of $SL(2)$ into $GL(n)$
given by $h_i$, $e_i$ and $f_i$.  For example, when $n=3$,
$$\bar s_1 = \begin{pmatrix}
0& -1 &0\\
1 & 0 & 0 \\
0 & 0 & 1 \end{pmatrix},\qquad 
\bar s_2 =
\begin{pmatrix}
1& 0 &0\\
0 & 0 & -1 \\
0 & 1 &0 \end{pmatrix}.$$
Now let $w=s_{i_1}\ldots s_{i_r}$ be a reduced decomposition and define
$\bar{w}=\bar s_{i_1}\ldots \bar s_{i_r}$.  Note that $\overline{uv}=\bar u\bar v$
whenever $l(uv)=l(u)+l(v)$.
For $n=2$, $ w_0=s_1$ and $$\bar  w_0=\bar s_1 =  \begin{pmatrix}
0 & -1  \\
1 & 0 \end{pmatrix} .$$
For $n=3$, $ w_0=s_1s_2s_1=s_2s_1s_2$ is represented by
$$
\bar  w_0= \bar s_1\bar s_2\bar s_1=\bar s_2\bar s_1\bar s_2 = \begin{pmatrix}
0& 0 &1\\
0 & -1 & 0 \\
1 & 0 & 0 \end{pmatrix}  .$$

Denote the upper (respectively lower) triangular matrices in $GL(n)$
by $B$ and $B_-$, and the upper (respectively lower) uni-triangular matrices in $GL(n)$
by $N$ and $N_-$.
The group $GL(n)$ has two Bruhat decompositions
$$GL(n)=\bigcup_{u\in S_n} B\bar{u}B=\bigcup_{v\in S_n} B_{-}\bar{v}B_{-}.$$
The double Bruhat cells $G^{u,v}$ are defined, for $u,v\in S_n$, by 
$$G^{u,v}=B\bar{u}B\cap B_{-}\bar{v}B_{-}.$$
The reduced double Bruhat cells $L^{u,v}$ are defined by
$$L^{u,v}=N\bar{u}N\cap B_{-}\bar v B_{-}.$$
We also define the opposite reduced double Bruhat cells 
$M^{u,v}$ by
$$M^{u,v}=B\bar{u}B\cap N_{-}\bar v N_{-}.$$

The reduced double Bruhat cell $L^{w,e}$ (where $e$ denotes the identity in $S_n$) admits the following
parameterisations, one for each reduced decomposition of $w$. 
See~\cite{l,bz,bfz,fz}. Set 
$$Y_i(u)=\varphi_i\begin{pmatrix}u&0\\1&u^{-1}\end{pmatrix},\qquad i=1,\ldots,n-1.$$
Then, for any reduced decomposition ${\bf i}=i_1\ldots i_r$ of $w$, 
the map
$$(u_1,\ldots,u_r) \mapsto Y_{i_1}(u_1)\cdots Y_{i_r}(u_r)$$
defines a bijection between $\C_{\ne 0}^r$ and $L^{w,e}$. 
This bijection has the property that the totally positive part $L^{u,v}_{>0}$
of $L^{u,v}$ corresponds precisely to the subset $\R_{>0}^r$ of $\C_{\ne 0}^r$.
There are explicit transition maps which relate the parameters 
$(u_1,\ldots,u_r)$ corresponding to different reduced decompositions of $w$.

In the case $n=3$, the two representations of an element in $L^{w_0,e}$
corresponding to the words $121$ and $212$, denoting
the corresponding parameters by $(u_1,u_2,u_3)$ and $(u'_1,u'_2,u'_3)$,
respectively, are given by
$$
\begin{pmatrix}u_1u_3&0&0\\u_3+u_2/u_1&u_2/u_1u_3&0\\1&1/u_3&1/u_2\end{pmatrix}
=\begin{pmatrix}u'_2&0&0\\u'_1&u'_1u'_3/u'_2&0\\1&u'_3/u'_2+1/u'_1&1/u'_1u'_3\end{pmatrix}.
$$
The transition maps are given by
\begin{equation}\label{tm}
u'_1=u_3+u_2/u_1,\qquad u'_2=u_1u_3,\qquad u'_3=u_1u_2/(u_2+u_1u_3).
\end{equation}

Their is a similar parameterisation for $M^{w,e}$, due to Lusztig~\cite{l}.
For $i=1,\ldots,n-1$, set $X_i(v)=I+v f_i$.
Take any reduced decomposition ${\bf i}=i_1\ldots i_r$ for $w$.  
Then the map
$$(v_1,\ldots,v_r) \mapsto X_{i_1}(v_1)\cdots X_{i_r}(v_r)$$
defines a bijection between $\C_{\ne 0}^r$ and $M^{w,e}$.  
This bijection also has the property that the totally positive part $M^{u,v}_{>0}$
of $M^{u,v}$ corresponds precisely to the subset $\R_{>0}^r$ of $\C_{\ne 0}^r$.

In the case $n=3$,  the two representations of an element in $M^{w_0,e}$
corresponding to the words $121$ and $212$, denoting
the corresponding parameters by $(v_1,v_2,v_3)$ and $(v'_1,v'_2,v'_3)$,
respectively, are given by
$$\begin{pmatrix}
1& 0 &0\\
v_1+v_3 & 1 & 0 \\
v_2v_3 & v_2 & 1 \end{pmatrix} 
=\begin{pmatrix}
1& 0 &0\\
v'_2 & 1 & 0 \\
v'_1v'_2 & v'_1+v'_3  & 1 \end{pmatrix} ,$$
with transition maps
\begin{equation}\label{tmv}
v'_1=\frac{v_2 v_3}{ v_1+ v_3},\qquad  v'_2= v_1+ v_3,\qquad  v'_3
=\frac{ v_1 v_2}{ v_1+ v_3}.
\end{equation}

We conclude this section with a simple lemma.
Let $b\in G^{e,w}$ and write $b=an$ where 
$a=\mbox{diag }(a_1,\ldots,a_n)$, say, and $n\in N$.  
Then~\cite{fz}, for any $w\in S_n$, $b \bar w$ has a Gauss (or LDU) 
decomposition $b\bar w = [b \bar w]_- [b \bar w]_0 [b \bar w]_+$ and
$n\bar w$ has a Gauss decomposition $n\bar w = [n \bar w]_- [n \bar w]_0 [n \bar w]_+$.
Moreover, $[n \bar w]_{-0}=[n \bar w]_- [n \bar w]_0 \in L^{w,e}$ and
$[b \bar w]_-\in M^{w,e}$.  Let ${\bf i}=i_1\ldots i_r$ be a reduced decomposition
for $w$.  Then we can write
$$[n \bar w]_{-0}=Y_{i_1}(u_1)\ldots Y_{i_r}(u_r),\qquad
[b \bar w]_- = X_{i_1}(v_1)\ldots X_{i_r}(v_r).$$
Define $Z_i(u)=\varphi_i\begin{pmatrix}u&0\\0&u^{-1}\end{pmatrix}$.
Set $a^0=a$ and, for $1\le k\le r$, $a^{k}=a^{k-1} Z_{i_k}(u_k)$.
Write $a^k=\mbox{diag }(a^k_1,\ldots,a^k_n)$.
\begin{lem}\label{u-v}
The following relations holds: $[b\bar w]_0=a^{r}$ and, for $k=1,\ldots,r$,
$v_k=u_k^{-1} a^{k-1}_{i_k+1}/a^{k-1}_{i_k}$.
\end{lem}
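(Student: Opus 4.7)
The plan is to sweep the diagonal factor $a$ through the product $Y_{i_1}(u_1)\cdots Y_{i_r}(u_r)$ from left to right, converting each $Y_{i_k}(u_k)$ into an $X_{i_k}(v_k)$ on the left and absorbing a $Z_{i_k}(u_k)$ on the right, and then invoke uniqueness of the Gauss decomposition.

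The first step is the $2\times 2$ identity
\[
\begin{pmatrix} u & 0 \\ 1 & u^{-1}\end{pmatrix}
=\begin{pmatrix} 1 & 0 \\ u^{-1} & 1\end{pmatrix}
\begin{pmatrix} u & 0 \\ 0 & u^{-1}\end{pmatrix},
\]
which after applying $\varphi_i$ gives $Y_i(u)=X_i(u^{-1})Z_i(u)$. The second ingredient is the elementary commutation rule between a diagonal matrix and a simple root vector: if $a=\mathrm{diag}(a_1,\ldots,a_n)$, then $a f_i a^{-1}=(a_{i+1}/a_i)\,f_i$, so that
\[
a\,X_i(c) = X_i\!\left(c\,\frac{a_{i+1}}{a_i}\right) a.
\]

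With these two identities, I would argue inductively. Starting from $n\bar w = Y_{i_1}(u_1)\cdots Y_{i_r}(u_r)\,[n\bar w]_+$ and multiplying by $a$ on the left, the first factor becomes
\[
a\,Y_{i_1}(u_1) = a\,X_{i_1}(u_1^{-1})Z_{i_1}(u_1)
= X_{i_1}(v_1)\,a\,Z_{i_1}(u_1) = X_{i_1}(v_1)\,a^1,
\]
with $v_1=u_1^{-1}a^0_{i_1+1}/a^0_{i_1}$ and $a^1=a^0 Z_{i_1}(u_1)$. Iterating, at stage $k$ we have pushed the diagonal through $k-1$ factors, leaving $a^{k-1}\,Y_{i_k}(u_k)=X_{i_k}(v_k)\,a^k$ with $v_k=u_k^{-1}a^{k-1}_{i_k+1}/a^{k-1}_{i_k}$ and $a^k=a^{k-1}Z_{i_k}(u_k)$, which is exactly the recursive definition given in the statement. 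After $r$ steps,
\[
b\bar w = \bigl[X_{i_1}(v_1)\cdots X_{i_r}(v_r)\bigr]\, a^r\, [n\bar w]_+.
\]

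The final step is a uniqueness argument: the first bracketed factor lies in $N_-$, $a^r$ is diagonal, and $[n\bar w]_+\in N$, so by uniqueness of the Gauss decomposition $[b\bar w]_- = X_{i_1}(v_1)\cdots X_{i_r}(v_r)$ and $[b\bar w]_0=a^r$. Comparing with the Lusztig parameterisation of $M^{w,e}$ then identifies the $v_k$ as claimed. The only potentially subtle point is making sure the $2\times 2$ LDU identity and the diagonal/root-vector commutation relation are applied with the correct indexing through the embedding $\varphi_{i_k}$, but once these conventions are fixed the argument is a one-line induction.
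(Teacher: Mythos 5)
Your proof is correct and follows essentially the same route as the paper: the paper's proof also reduces to repeated application of the single identity $aY_i(u)=X_i(v)a'$ with $a'=aZ_i(u)$ and $v=u^{-1}a_{i+1}/a_i$, applied to the relation $aY_{i_1}(u_1)\cdots Y_{i_r}(u_r)=X_{i_1}(v_1)\cdots X_{i_r}(v_r)[b\bar w]_0$. The only difference is that you derive this identity from the $2\times2$ LDU factorisation $Y_i(u)=X_i(u^{-1})Z_i(u)$ together with the commutation $aX_i(c)=X_i(c\,a_{i+1}/a_i)a$, and make the uniqueness of the Gauss decomposition explicit at the end, whereas the paper states the identity directly and leaves the uniqueness step implicit.
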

\begin{proof}
Note that $a[n\bar w]_{-0}=[b\bar w]_{-0}= [b \bar w]_- [b \bar w]_0$, hence
$$a Y_{i_1}(u_1)\ldots Y_{i_r}(u_r)= X_{i_1}(v_1)\ldots X_{i_r}(v_r) [b\bar w]_0.$$
The result follows by repeated application of the identity $a Y_i(u)=X_i(v) a'$,
where $a'=aZ_i(u)$ and $v=u^{-1} a_{i+1}/a_i$.
\end{proof}

\section{An evolution on upper triangular matrices}

As shown in \cite{bbo}, the path-transformations $ T_{w}\eta$ can also be 
represented in terms of an evolution on  the upper triangular matrices 
in $GL(n,\R)$.  Let $w=s_{i_1}\cdots s_{i_r}$ be a reduced decomposition and
$\eta:(0,\infty)\to\R^n$ a continuous path.
Set $\eta_ 0=\eta$ and, for $k\le  r$,
\begin{equation}\label{xdef}
\eta_{k}=  T_{i_{k}}\ldots   T_{i_{ 1}}\eta \qquad
x_k(t)=\log\int_0^t e^{-\alpha_{i_k}(\eta_{k-1}(s))} ds.
\end{equation}
Then $\eta_r=  T_{ w}\eta$ and, for each $k\le  r$,
$ \eta_k=\eta+\sum_{j=1}^k x_j \alpha_{i_j} $.

Write $\eta(t)=(\eta^1_t,\ldots,\eta^n_t)$.
Define a path $b(t)$ taking values in $B$ by
$$b_{ij}(t)=e^{\eta^i(t)}
\int_{0<s_{j-1}<s_{j-2}\cdots<s_i<t} 
\exp\left(-\sum_{k=i}^{j-1} \alpha_k(\eta(s_k))\right) ds_i\cdots ds_{j-1}.$$
If $\eta$ is smooth, the $b$ satisfies the ordinary differential equation
$$db = \left(\sum_{i=1}^n h_id\eta^i + \sum_{i=1}^{n-1} e_i dt \right) b,
\qquad b(0)=I.$$
If $\eta$ is a Brownian path (as in the next section)
then $b$ satisfies the equation interpreted as a Stranovich SDE.

When $n=2$,
$$d b = \left( \begin{array}{cc}
d\eta^1 & dt  \\
0 & d\eta^2   \end{array} \right) b,\qquad
b(t)=\left( \begin{array}{cc}
e^{\eta^1_t} & \int_0^t e^{\eta^2_s-\eta^1_s+\eta^1_t} ds \\
0 & e^{\eta^2_t}  \end{array} \right) .$$
When $n=3$,
$$d b = \left( \begin{array}{ccc}
d\eta^1 & dt &0 \\
0 & d\eta^2 & dt\\
0&0&d\eta^3 \end{array} \right) b,$$
and the solution is given by
$$b(t)=\left( \begin{array}{ccc}
e^{\eta^1_t} & \int_0^t e^{\eta^2_s-\eta^1_s+\eta^1_t} ds 
& \int\int_{0<r<s<t} e^{\eta^3_r-\eta^2_r+\eta^2_s-\eta^1_s+\eta^1_t} dr ds\\
0 & e^{\eta^2_t} & \int_0^t e^{\eta^3_s-\eta^2_s+\eta^2_t} ds\\
0 & 0 & e^{\eta^3_t} \end{array} \right) .$$

Write $b=an$, where $a=\mbox{diag}(e^{\eta^1},\ldots,e^{\eta^n})$
and $n\in N$.  Set $u_k=e^{x_k}$ and $v_k=e^{-x_k-\alpha_k(\eta_{k-1})}$.
\begin{thm}\cite{bbo,bbo09}
For each $t>0$, $b(t)\bar w$ has a Gauss decomposition 
$b \bar  w=[b \bar  w]_{-}[b \bar  w]_0[b \bar  w]_{+}$,
with $[b \bar  w]_0=\exp( T_{ w}\eta(t))$.  
Moreover, $[n \bar  w]_{-0}=Y_{i_1}(u_1)\cdots Y_{i_r}(u_r)\in L^{w,e}_{>0} $.
\end{thm}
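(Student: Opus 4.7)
My plan is to prove both assertions simultaneously by induction on the length $r$ of the reduced decomposition $\mathbf{i} = i_1\ldots i_r$, reducing each inductive step to an $SL(2)$-computation inside the block $\varphi_{i_k}(SL(2))$ and then invoking Lemma~\ref{u-v} to read off the diagonal factor. The base case $r=0$ is immediate: $\bar e = I$, so $b = an$ is already its own Gauss decomposition, $[b]_0 = a = \exp(\eta(t)) = \exp(T_e\eta(t))$, and the Lusztig parameterization is the empty product. As a sanity check for the first step, when $w=s_{i_1}$ one has $[b]_+ = n$ and $n_{i_1,i_1+1} = b_{i_1,i_1+1}/e^{\eta^{i_1}(t)} = \int_0^t e^{-\alpha_{i_1}(\eta(s))}\,ds = e^{x_1}$, which already matches the definition of $u_1$.

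For the inductive step in general, set $w_k = s_{i_1}\cdots s_{i_k}$ and assume the theorem holds for $w_{k-1}$, so
$$b\bar w_{k-1} = [b\bar w_{k-1}]_-\, [b\bar w_{k-1}]_0\, [b\bar w_{k-1}]_+,\qquad [b\bar w_{k-1}]_0 = \exp(\eta_{k-1}(t)),$$
with $[n\bar w_{k-1}]_{-0} = Y_{i_1}(u_1)\cdots Y_{i_{k-1}}(u_{k-1})$ and $u_j = e^{x_j}$ for $j<k$. Since $\bar w_k = \bar w_{k-1}\bar s_{i_k}$ and the left factor $[b\bar w_{k-1}]_-$ passes through unchanged, the task is to Gauss-factor $[b\bar w_{k-1}]_0\,[b\bar w_{k-1}]_+\,\bar s_{i_k}$. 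Because $\bar s_{i_k}$ lies in $\varphi_{i_k}(SL(2))$, this refactoring is controlled by the principal $2\times 2$ block in rows and columns $i_k, i_k+1$, and a short $SL(2)$ calculation shows that the new diagonal entries in those two positions are $e^{\eta_{k-1}^{i_k}(t)}\,q$ and $e^{\eta_{k-1}^{i_k+1}(t)}/q$, where $q$ denotes the $(i_k, i_k+1)$-entry of $[b\bar w_{k-1}]_+$, while an additional rightmost $Y_{i_k}(q)$ factor appears in the Lusztig parameterization.

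The main obstacle is to identify $q$ with $e^{x_k} = \int_0^t e^{-\alpha_{i_k}(\eta_{k-1}(s))}\,ds$. For this I would exploit the evolution equation $db = \bigl(\sum_j h_j\, d\eta^j + \sum_j e_j\, dt\bigr)\,b$, which is preserved under right multiplication by $\bar w_{k-1}$, so $b\bar w_{k-1}$ satisfies the same equation with initial value $\bar w_{k-1}$. Projecting this evolution onto the Gauss factors produces integral equations for $[b\bar w_{k-1}]_\pm$ and $[b\bar w_{k-1}]_0$; the equation for the $(i_k, i_k+1)$-entry of $[b\bar w_{k-1}]_+$ is solved explicitly by $\int_0^t e^{-\alpha_{i_k}(\eta_{k-1}(s))}\,ds$, giving a path-integral representation analogous to the $n=2,3$ formulas for $b$ but with $\eta$ replaced by $\eta_{k-1}$. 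Granting this, $q = e^{x_k}$, and the $2\times 2$ refactoring then yields $[b\bar w_k]_0 = [b\bar w_{k-1}]_0 Z_{i_k}(e^{x_k}) = \exp(\eta_{k-1}+x_k\alpha_{i_k}) = \exp(\eta_k(t)) = \exp(T_{w_k}\eta(t))$, in agreement with Lemma~\ref{u-v}, and the Lusztig factorization acquires the new factor $Y_{i_k}(e^{x_k})$. Since every $u_k = e^{x_k}>0$, we conclude $[n\bar w]_{-0}\in L^{w,e}_{>0}$, completing the induction.
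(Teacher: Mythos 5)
Your proposal is a genuine proof by induction on the length of the reduced word, whereas the paper itself offers no general proof: it cites \cite{bbo,bbo09} and then simply verifies the statement explicitly for $n=2$ and $n=3$ in the subsequent subsections. So the comparison is really between your constructive induction and a citation-plus-low-rank-check. Your route is sound and its skeleton is right: the key computational lemma, which you gesture at via the phrase ``controlled by the principal $2\times 2$ block,'' is that for upper unitriangular $U$ one has $[U\bar s_i]_{-0}=Y_i(U_{i,i+1})$ (only the single superdiagonal entry $U_{i,i+1}$ matters, because right-multiplication by $\bar s_i$ touches only columns $i,i+1$ and the pivot structure isolates that one entry); from this, writing $b\bar w_{k-1}=LDU$ gives $[b\bar w_k]_{-0}=LDY_{i_k}(U_{i_k,i_k+1})$, hence $[b\bar w_k]_0=DZ_{i_k}(U_{i_k,i_k+1})$, and the diagonal update matches $\exp(\eta_{k-1}+x_k\alpha_{i_k})=\exp(\eta_k)$ once $U_{i_k,i_k+1}=e^{x_k}$ is established; similarly $[n\bar w_k]_{-0}=[n\bar w_{k-1}]_{-0}Y_{i_k}(u_k)$, which gives the Lusztig-type parameterization and, by positivity of the $u_j$, membership in $L^{w,e}_{>0}$. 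What your approach buys is a uniform proof in all ranks and a transparent explanation of why the $Y$-factors appear one at a time along the reduced word.

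The one place you yourself flag as ``granting this'' is the identification $q=U_{i_k,i_k+1}=\int_0^t e^{-\alpha_{i_k}(\eta_{k-1}(s))}ds$, and it deserves to be filled in since it is where the path transform actually enters. Writing $g=b\bar w_{k-1}=LDU$ and $dg=\bigl(\sum_j h_j\,d\eta^j+\sum_j e_j\,dt\bigr)g$, one obtains after conjugating that $dU\,U^{-1}$ equals the strictly upper part of $D^{-1}\bigl(L^{-1}(\sum_j h_j\,d\eta^j+\sum_j e_j\,dt)L\bigr)D$. Since $L$ is lower unitriangular, the superdiagonal of $L^{-1}(\sum_j e_j)L$ is again all ones and $L^{-1}(\sum_j h_j\,d\eta^j)L$ contributes nothing above the diagonal; conjugation by $D$ then gives $(dU)_{i,i+1}=(D_{i+1,i+1}/D_{i,i})\,dt=e^{-\alpha_i(\eta_{k-1}(t))}\,dt$ using the inductive hypothesis $D=\exp(\eta_{k-1})$. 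Integrating from $0$ and using $U(0^+)=I$ (one does need a word about the $t\to 0^+$ limit, since $\bar w_{k-1}$ itself has no Gauss decomposition) yields exactly $U_{i_k,i_k+1}(t)=\int_0^t e^{-\alpha_{i_k}(\eta_{k-1}(s))}\,ds=e^{x_k(t)}$. With this inserted, your induction closes.
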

By Lemma \ref{u-v}, we also have 
$[b \bar  w]_{-}=X_{i_1}(v_1)\cdots X_{i_r}(v_r)\in M^{w,e}_{>0} $.

\subsection{The case $n=2$}
From the definitions: $\alpha_1=e_1-e_2$, $w_0=s_1=s_{e_1-e_2}$,
$$u:=u_1=e^{x_1}=\int_0^t e^{-\eta^1_s+\eta^2_s} ds\qquad
v:=v_1=e^{-y_1}=e^{-\eta^1+\eta^2}u^{-1}$$
$$e^{T_{w_0}\eta} = (e^{\eta^1}u,e^{\eta^2}u^{-1} )
= \left(\int_0^t e^{\eta^2_s+\eta^1_t-\eta^1_s} ds,\ \int_0^t e^{-(\eta^1_s+\eta^2_t-\eta^2_s)}
\right) $$
$$b=
\begin{pmatrix}
e^{\eta^1} & \int_0^t e^{\eta^2_s-\eta^1_s+\eta^1_t} ds \\
0 & e^{\eta^2} \end{pmatrix}
=\begin{pmatrix}
e^{\eta^1} & e^{\eta^1} u  \\
0 & e^{\eta^2}  \end{pmatrix}
= \begin{pmatrix}
e^{\eta^1} & 0  \\
0 & e^{\eta^2}   \end{pmatrix}
\begin{pmatrix}
1 & u  \\
0 & 1  \end{pmatrix} = an
$$
Taking $\bar w_0=\begin{pmatrix}0&-1\\1&0\end{pmatrix}$, we see that
$$b\bar w_0 =\begin{pmatrix}
e^{\eta^1} u & - e^{\eta^1}    \\
e^{\eta^2}&0  \end{pmatrix} 
= \begin{pmatrix}1&0\\v&1\end{pmatrix}
\begin{pmatrix}e^{\eta^1}u&0\\0&e^{\eta^2}u^{-1}\end{pmatrix}
\begin{pmatrix}1&-u^{-1}\\0&1\end{pmatrix}
$$
and
$$n\bar w_0 = \begin{pmatrix}
1 & u  \\
0 & 1  \end{pmatrix}
\begin{pmatrix}0&-1\\1&0\end{pmatrix}
=\begin{pmatrix}u&-1\\1&0\end{pmatrix}
=\begin{pmatrix}u&0\\1&u^{-1}\end{pmatrix}
\begin{pmatrix}1&-u^{-1}\\0&1\end{pmatrix}
$$
Hence 
$$[b\bar w_0]_0 = e^{T_{w_0}\eta},\qquad
[b\bar w_0]_{-}=\begin{pmatrix}1&0\\v&1\end{pmatrix}=X_1(v),
\qquad [n\bar w_0]_{-0}=\begin{pmatrix}u&0\\1&u^{-1}\end{pmatrix}=Y_1(u)$$
as claimed.

\subsection{The case $n=3$}
From the definitions: 
$$\alpha_1=e_1-e_2,\quad \alpha_2=e_2-e_3,\qquad w_0=s_1s_2s_1=s_2s_1s_2.$$
For the reduced decomposition $w_0=s_1s_2s_1$, we have
$$u_1=e^{x_1}=\int_0^t e^{-\eta^1(s)+\eta^2(s)} ds,\qquad
e^{\eta_1}=(e^{\eta^1}u_1,e^{\eta^2}/u_1,e^{\eta^3} )$$
$$u_2=e^{x_2}=\int_0^t e^{-\eta_1^2(s)+\eta_1^3(s)} ds,\qquad
e^{\eta_2}=(e^{\eta^1}u_1,e^{\eta^2}u_2/u_1,e^{\eta^3}/u_2 )$$
$$u_3=e^{x_3}=\int_0^t e^{-\eta_2^1(s)+\eta_2^2(s)} ds,\qquad
e^{\eta_3}=e^{T_{w_0}\eta} = (e^{\eta^1}u_1u_3,e^{\eta^2}u_2/u_1u_3,
e^{\eta^3}/u_2 ).$$
$$v_1=e^{-y_1}=e^{-\eta^1+\eta^2}/u_1\qquad
v_2=e^{-y_2}=e^{-\eta^2+\eta^3}u_1/u_2\qquad
v_3=e^{-y_3}=e^{-\eta^1+\eta^2}u_2/u_1^2 u_3$$

\begin{align*}
b &=\begin{pmatrix}
e^{\eta^1} & \int_0^t e^{\eta^2_s-\eta^1_s+\eta^1_t} ds 
& \int\int_{0<r<s<t} e^{\eta^3_r-\eta^2_r+\eta^2_s-\eta^1_s+\eta^1_t} dr ds\\
0 & e^{\eta^2} & \int_0^t e^{\eta^3_s-\eta^2_s+\eta^2_t} ds\\
0 & 0 & e^{\eta^3}\end{pmatrix} \\
&= \begin{pmatrix}
e^{\eta^1} & 0 &0\\
0 & e^{\eta^2} & 0\\
0 & 0 & e^{\eta^3}\end{pmatrix}
\begin{pmatrix}
1 & u_1 &u_1 u_3\\
0 & 1 & u_3+u_2/u_1\\
0 & 0 & 1\end{pmatrix} =an .
\end{align*}
The identity 
$$\int_0^t e^{\eta^3_s-\eta^2_s+\eta^2_t} ds = u_3+u_2/u_1$$
follows from \eqref{tm}.
Now,
$$
\bar  w_0=  \begin{pmatrix}
0& 0 &1\\
0 & -1 & 0 \\
1 & 0 & 0 \end{pmatrix}  ,$$
so we have
\begin{align*}
&\qquad b\bar w_0 =\begin{pmatrix}
e^{\eta^1} u_1u_3 & - e^{\eta^1}u_1 &  e^{\eta^1}   \\
e^{\eta^2}(u_3+u_2/u_1) &-e^{\eta^2}&0\\
e^{\eta^3}&0&0  \end{pmatrix} \\
&= 
\begin{pmatrix}1& 0 &0\\v_1+v_3 & 1 & 0 \\v_2v_3 & v_2 & 1 \end{pmatrix} 
\begin{pmatrix}
e^{\eta^1}u_1u_3&0&0\\
0&e^{\eta^2}u_2/u_1u_3&0\\
0&0&e^{\eta^3}/u_2 \end{pmatrix}
\begin{pmatrix}
1&-1/u_3&1/u_1u_3 \\
0&1&-u_3/u_2-1/u_1\\
0&0&1\end{pmatrix} 
\end{align*}
and
\begin{align*}
n \bar w_0 &= 
\begin{pmatrix}u_1u_3 & - u_1 &  1   \\u_3+u_2/u_1 &-1&0\\1&0&0  \end{pmatrix} \\
&= 
\begin{pmatrix}u_1u_3&0&0\\u_3+u_2/u_1&u_2/u_1u_3&0\\1&1/u_3&1/u_2\end{pmatrix}
\begin{pmatrix}1&-1/u_3&1/u_1u_3 \\ 0&1&-u_3/u_2-1/u_1\\ 0&0&1\end{pmatrix} 
\end{align*}
Thus $[b\bar w_0]_0 = e^{T_{w_0}\eta}$,
$$[b\bar w_0]_{-}=\begin{pmatrix}1& 0 &0\\v_1+v_3 & 1 & 0 \\v_2v_3 & v_2 & 1 \end{pmatrix} 
=X_1(v_1)X_2(v_2)X_3(v_3),$$
$$ [n\bar w_0]_{-0}=
\begin{pmatrix}u_1u_3&0&0\\u_3+u_2/u_1&u_2/u_1u_3&0\\1&1/u_3&1/u_2\end{pmatrix}
=Y_1(u_1)Y_2(u_2)Y_3(u_3)$$
as claimed.

\subsection{Evolution of the Lusztig parameters}

As before, we introduce a probability measure $\mathbb P$ under which $\eta$ 
is a Brownian motion in $\R^n$ with a drift $\mu$ and $\eta(0)=0$.  
For each $k\le n$, set
$$( T_{k,1},\ldots, T_{k,k})= T^{(k)}(\eta^1,\ldots,\eta^k).$$
Note that this is given in terms of the principal minors $b^{(k)}$, $k\le n$, of $b$ by
$T^{(k)}(\eta^1,\ldots,\eta^k) =\log [b^{(k)}\bar w^{(k)}_0]_0$,
where $w^{(k)}_0$ denotes the longest element  in $S_k$.
The evolution of the triangular array $   T_{k,j},\ 1\le j\le k \le n$
is given by \eqref{T-ev}.  As remarked earlier, this process
contains a number of projections which are also Markov.  
In particular, setting $\xi_k=T_{k,k}$, we have, for $k\le n$,
$$d   \xi_k = d\eta^k - e^{  \xi_k-   \xi_{k-1}} dt .$$
This defines a simple interacting particle system on the real line which, 
in the coordinates $\sum_i\xi_i$ and $\xi_{i+1}-\xi_i$, $1\le i\le n-1$, has 
a product form invariant measure.  There is an extension of this process, 
involving the Lusztig parameters, which is also Markov and, moreover, 
also has a product form invariant measure.
Let $v_1,\ldots,v_q$ be the Lusztig parameters corresponding to a reduced decomposition
$w_0=s_{i_1}\ldots s_{i_q}$, that is,
$$[b\bar w_0]_{-}=X_{i_1}(v_1)\cdots X_{i_{q}}(v_{q}).$$
Set $y_k=-\log v_k$.  The evolution of $y_k$, $1\le k\le q$, is given by
$$dy_k=d\alpha_{i_k}(\eta_{k-1})+e^{-y_k}dt,$$
where $\eta_k=T_{i_k}\ldots T_{i_1}\eta$.
Setting $x_k=y_k-\alpha_{i_k}(\eta_{k-1})$, note that $dx_k=e^{-y_k}dt$
and $\eta_k=\eta+\sum_{j=1}^k x_j\alpha_j$.
Hence,
\begin{equation}\label{l}
dy_k=d\alpha_{i_k}(\eta)+\sum_{j=1}^{k-1}\alpha_{i_k}(\alpha_{i_j})e^{-y_j}dt+e^{-y_k}dt.
\end{equation}
Let $\beta_1=\alpha_{i_1}$ and, for $2\le k\le q$, $\beta_k=s_{i_1}\ldots s_{i_{k-1}}\alpha_{i_k}$.
Set $\theta_k=-\beta_k(\mu)$.  If $\mu\in w_0\Omega=-\Omega$, then $\theta_k>0$ for all $k$ and the 
diffusion has stationary distribution given by the product measure
$$\pi=\bigotimes_{k=1}^q \G(\theta_k)^{-1} g_{\theta_{k}},$$
where $g_{\theta}(dx) = \exp(-\theta x-e^{-x}) dx$.
This can be seen as a consequence of the following fact, which is the analogue in this setting of the
output theorem for the $M/M/1$ queue~\cite{oy2}.  Let $x_t$ be a standard one-dimensional
Brownian motion with negative drift $-\theta$, and consider the one-dimensional diffusion
$$dy=\sqrt{2}dx+e^{-y}dt.$$
This has a unique invariant distribution $\G(\theta)^{-1}g_\theta$.  If we start this diffusion
in equilibrium and define $\tilde x_t=x_t+2(y_0-y_t)$, then $\tilde x$ has the same law as
$x$ and, moreover, $\tilde x_s,\ s\le t$ is independent of $y_u,\ u\ge t$, for all $t$.
It follows that the measure $\pi$ is invariant.  For an analytic proof of this fact, see~\cite{oo}.  
See also~\cite[Proposition 5.9]{bbo09}, where the equivalent property is proved in the `zero-temperature' setting.

If we choose the reduced decomposition
${\bf i}= 1\ 21\ 321\ n-1\ n-2\ldots 21$,
and define,  for $m\le n-1$ and $1\le i\le n-m$,  
$q_{m,i}=T_{i+m,i+1}-   T_{i+m-1,i}$, then 
$$(y_1,y_2,\ldots,y_q)=(q_{1,1},q_{1,2},\ldots,q_{1,n},q_{2,1},\ldots,q_{2,n-1},\ldots,q_{n-1,1}).$$
Note that $q_{1,i}=\xi_{i+1}-\xi_{i}$, for $1\le i\le n-1$.  In these coordinates, the
evolution is given by
$$dq_{m,i}=d\alpha_i(\eta)+e^{-q_{m,i}}dt
+\sum_{l=1}^{m-1} \left( 2e^{-q_{l,i}}-e^{-q_{l,i+1}}-e^{-q_{l,i-1}}\right) dt,$$
with the conventions that the empty sum is zero and $q_{l,0}=+\infty$.
Setting $\theta_{m,i}=\mu_{m+i}-\mu_m$, an invariant measure for this diffusion
is given by the product measure $\bigotimes_{m,i} g_{\theta_{m,i}}$.
The dynamics of this process can be viewed as a network, as follows.
Consider the dynamics
$$dQ=d(A-S)+e^{-Q}dt,\qquad dD=dA-dQ,\qquad dT=dS+dQ.$$
We think of $A,S$ as the input and $D,T$ as the output, and represent 
this system graphically as:
\setlength{\unitlength}{1.5cm}
\setlength{\fboxsep}{.2cm}

\begin{center}

\begin{picture}(4,1.7)(-2,1.2)

\put(0,1.85){\framebox(.3,.3){$Q$}}

\put(-.4,2.1){$S$}
\put(.25,2.5){$A$}

\put(.55,2.1){$D$}
\put(.25,1.5){$T$}

\put(-.6,2){\vector(1,0){.5}}
\put(.4,2){\vector(1,0){.5}}

\put(0.15,2.75){\vector(0,-1){.5}}
\put(0.15,1.75){\vector(0,-1){.5}}

\end{picture}
\end{center}

Then the evolution of the $q_{m,i}$ can be represented as in Figure 2.
To see directly from this picture the product-form invariant measure, note
that, if $A$ and $S$ are independent standard one-dimensional Brownian motions
with respective drifts $\lambda$ and $\sigma$, with $\lambda<\sigma$, 
then the diffusion $Q$ has invariant distribution $\G(\theta)^{-1}g_\theta$,
where $\theta=\sigma-\lambda$.  Moreover, if we start this diffusion in
equilibrium, then $D_t=A_t+Q_0-Q_t$ and $T_t=S_t-Q_0+Q_t$ are
independent standard one-dimensional Brownian motions
with respective drifts $\lambda$ and $\sigma$, and for each $t>0$,
$(D_s,T_s),\ s\le t$ is independent of $Q_u,\ u\ge t$.  The analogue
of this fact in the setting of Poissonian queueing networks  is the
cornerstone of classical queueing theory.  It is called the output, or Burke, theorem.
Finally, we remark that the dynamics indicated by Figure 2 is the analogue,
in this setting, of the dynamical interpretation given in \cite{noc-rsk} of the 
RSK correspondence as a kind of `queueing network'.

\begin{figure}[h]

\setlength{\unitlength}{2.5cm}
\setlength{\fboxsep}{.2cm}

\begin{center}

\begin{picture}(5,3.5)(-.7,-.7)

\put(2,-.15){\framebox(.3,.3){$q_{3,1}$}}
\put(1,.85){\framebox(.3,.3){$q_{2,1}$}}
\put(2,.85){\framebox(.3,.3){$q_{2,2}$}}
\put(0,1.85){\framebox(.3,.3){$q_{1,1}$}}
\put(1,1.85){\framebox(.3,.3){$q_{1,2}$}}
\put(2,1.85){\framebox(.3,.3){$q_{1,3}$}}

\put(-.4,2.1){$\eta^1$}
\put(.25,2.5){$\eta^2$}
\put(1.25,2.5){$\eta^3$}
\put(2.25,2.5){$\eta^4$}

\put(.55,2.1){$T_{2,2}$}
\put(1.55,2.1){$T_{3,3}$}
\put(2.55,2.1){$T_{4,4}$}

\put(.3,1.1){$T_{2,1}$}
\put(1.55,1.1){$T_{3,2}$}
\put(2.55,1.1){$T_{4,3}$}

\put(1.3,.1){$T_{3,1}$}
\put(2.55,.1){$T_{4,2}$}

\put(2.25,-.55){$T_{4,1}$}

\put(-.6,2){\vector(1,0){.5}}
\put(.4,2){\vector(1,0){.5}}
\put(1.4,2){\vector(1,0){.5}}
\put(2.4,2){\vector(1,0){.5}}
\put(1.4,1){\vector(1,0){.5}}
\put(2.4,1){\vector(1,0){.5}}
\put(2.4,0){\vector(1,0){.5}}

\put(0.15,2.75){\vector(0,-1){.5}}
\put(1.15,2.75){\vector(0,-1){.5}}
\put(2.15,2.75){\vector(0,-1){.5}}

\put(0.15,1.75){\line(0,-1){.75}}
\put(0.15,1){\vector(1,0){.75}}

\put(1.15,.75){\line(0,-1){.75}}
\put(1.15,0){\vector(1,0){.75}}

\put(1.15,1.75){\vector(0,-1){.5}}
\put(2.15,1.75){\vector(0,-1){.5}}

\put(2.15,.75){\vector(0,-1){.5}}
\put(2.15,-.25){\vector(0,-1){.5}}

\end{picture}
\end{center}

\caption{Graphical representation of the evolution of Lusztig parameters}

\end{figure}
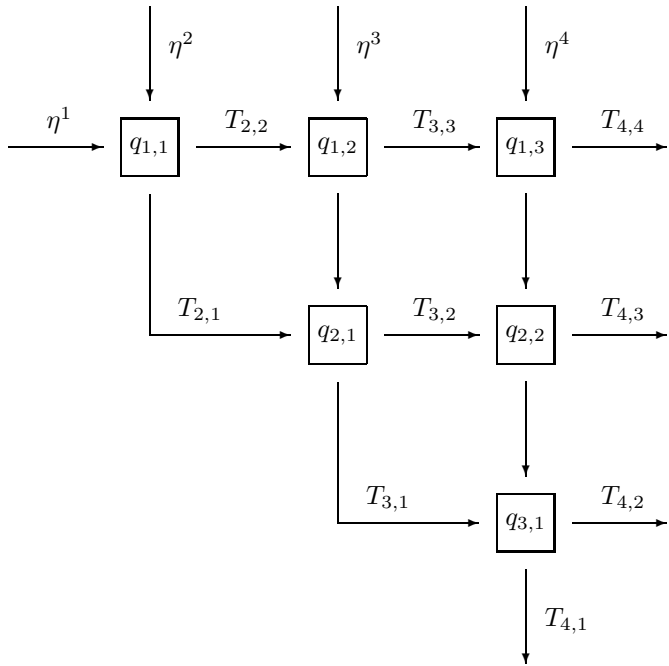

\section{From the Feyman-Kac formula to Givental's integral formula}

The fact that the evolution equation \eqref{l} for the Lusztig parameters
has a product form invariant measure sheds some light on the relation
between the Feyman-Kac formula~\eqref{fk} and the integral formula
of Givental.  It follows from this that, for
any given reduced decomposition of $w_0$, the random variables
$$\int_0^\infty e^{-\alpha_i(\beta_s)} ds,\qquad i=1,\ldots,n-1$$
can be expressed, via the transition maps, as rational functions of a collection 
of $q=n(n-1)/2$ independent Gamma-distributed random variables with 
respective parameters $\theta_k$, $k\le q$, defined as above with $\beta=-\eta$.  
Note that $\beta$ is a Brownian motion with drift $\lambda=-\mu\in\Omega$. 
Since the sets
$\{\theta_k,\ k\le q\}$ and $\{\l_i-\l_j,\ i<j\}$ are the same, this allows \eqref{fk}
to be written as a $q$-dimensional integral
\begin{align}\label{nif}
\psi_\lambda(x)&=\prod_{i<j} \G(\lambda_i-\lambda_j) e^{\lambda(x)}
\E_x \exp\left(-\sum_{i=1}^{n-1}\int_0^\infty e^{-\alpha_i(\beta_s)} ds\right) \nonumber\\
&= e^{\lambda(x)} \int_{\R_+^q} e^{-\sum_{i=1}^{n-1} e^{-\alpha_i(x)} r_i(v_1,\ldots,v_q)} 
\prod_{i=1}^q v_i^{\theta_i-1} e^{-v_i} dv_i .
\end{align}
For example, when $n=3$ and $\mathbf{i}=121$, we have 
$$\theta_1=\lambda_1-\lambda_2,\quad \theta_2=\lambda_1-\lambda_3, \quad \theta_3=\lambda_2-\lambda_3,$$
and, using (\ref{tmv}),
$$r_1(v_1,v_2,v_3)=\frac1{v_1},\qquad r_2(v_1,v_2,v_3)=\frac1{v'_1}=\frac{v_1+v_3}{v_2 v_3}.$$
In this case, the integral formula (\ref{nif}) becomes
\begin{align*}
\psi_\lambda(x) = e^{\lambda_1 x_1+\lambda_2 x_2+\lambda_3 x_3} 
 \int_{\R_+^3} & v_1^{\lambda_1-\lambda_2-1} v_2^{\lambda_1-\lambda_3-1} v_3^{\lambda_2-\lambda_3-1} \\
& \times \exp\left(-v_1-v_2-v_3-e^{-x_1+x_2} \frac1{v_1} -e^{-x_2+x_3} \frac{v_1+v_3}{v_2 v_3} \right)
dv_1dv_2dv_3 .
\end{align*}
Under the change of variables 
$$v_1=e^{T_{32}-T_{21}},\qquad v_2=e^{T_{33}-T_{22}},\qquad v_3=e^{T_{22}-T_{11}},$$
where $T=(T_{ki},\ 1\le i\le k\le 3)$ is an array with $(T_{31},T_{32},T_{33})=(x_1,x_2,x_3)$, this integral becomes
\begin{align*}
\psi_\lambda(x) = 
 \int_{\R^3} & e^{\lambda_1(T_{31}+T_{32}+T_{33}-T_{21}-T_{11})+\lambda_2 (T_{21}+T_{22}-T_{11})+\lambda_3 T_{11}}\\
& \times \exp\left(-e^{T_{32}-T_{21}}-e^{T_{33}-T_{22}}-e^{T_{22}-T_{11}}
 -e^{T_{21}-T_{31}}-e^{T_{11}-T_{22}}-e^{T_{22}-T_{32}} \right)
dT_{11} dT_{21} dT_{22}.
\end{align*}

Since $\Psi_\lambda(x)$ is a symmetric function of $\lambda$ we see that this agrees with Givental's integral formula (\ref{g}).
We note that this is reminiscent of the derivation of Givental's formula given in~\cite{GLO}
(see also \cite{gklo,GKMMMO}).

\section{Fundamental Whittaker functions}

The eigenvalue equation \eqref{ee} also has series solutions
known as {\em fundamental} Whittaker functions.  
Define a collection of analytic functions 
$a_{n,m}(\nu)$, $n\ge 2$, $m\in (\Z_+)^{n-1}$, $\nu\in\C^n$ recursively by
$$a_{2,m}(\nu)=\frac1{m!\G(\nu_1-\nu_2+m+1)},$$
and for $n>2$,
$$a_{n,m}(\nu)=\sum_{k} a_{n-1,k}(\mu) \prod_{i=1}^{n-1} \frac1{(m_i-k_i)!}
\frac1{\G(\nu_i-\nu_n+m_i-k_{i-1})} ,$$
where $\mu_i=\nu_i+\nu_n/(n-1)$, $i\le n-1$, and the sum is over $k\in (\Z_+)^{n-2}$
satisfying $k_i\le m_i$, $1\le i\le n-2$, with the convention that $k_0=k_{n-1}=0$.
Then~\cite[Theorem 15]{is} for each $n$, $a_{n,m}(\nu)$ satisfies the recursion
$$ \left[\sum_{i=1}^{n-1} m_i^2 - \sum_{i=1}^{n-2}m_i m_{i+1} 
+\sum_{i=1}^{n-1} (\nu_i-\nu_{i+1}) m_i \right] 
a_{n,m}(\nu) =  \sum_{i=1}^{n-1} a_{n,m-e_i}(\nu),$$
with the convention that $a_{n,m}=0$ for $m\notin (\Z_+)^{n-1}$, and
$a_{n,0}(\nu)=\prod_{i<j}\Gamma(\nu_i-\nu_j+1)^{-1}$.
Writing $m'=\sum_{i=1}^{n-1} m_i (e_i-e_{i+1})$, the series
\begin{equation}\label{series}
m_\nu(x)=\sum_m a_{n,m}(\nu) e^{-(m'+\nu,x)}
\end{equation}
is a fundamental Whittaker function as defined by Hashizume~\cite{h},
and satisfies the eigenvalue equation \eqref{ee}.  We adopt a slightly different
normalisation than the ones used in the papers~\cite{h} or \cite{is}.
Note that, for each $x\in\R^n$, $m_\nu(x)$ is an analytic function of $\nu$.
Moreover:
\begin{prop}
$$\psi_\nu(x) = \prod_{i<j} \frac{\pi}{\sin\pi(\nu_i-\nu_j)} \sum_{ w\in S_n} (-1)^ w
m_{- w \nu} (x).$$
\end{prop}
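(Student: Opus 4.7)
The plan is to identify both sides as solutions of the same eigenvalue problem with the same prescribed asymptotic behavior in the chamber $\Omega$, and then invoke the uniqueness result of~\cite{boc} cited earlier. Since $\psi_\nu(x)$ is entire and symmetric in $\nu$, and since the right-hand side is also symmetric in $\nu$ (the antisymmetry of $(-1)^w m_{-w\nu}$ is exactly compensated by the antisymmetry of $\prod_{i<j}\pi/\sin\pi(\nu_i-\nu_j)$ under reflections in the roots), it suffices to establish the identity for $\nu\in\Omega$ and then appeal to analytic continuation.

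First I would record that each $m_\nu$ satisfies the eigenvalue equation $Hm_\nu=(\sum_i\nu_i^2)m_\nu$; this is a direct consequence of the recursion for $a_{n,m}(\nu)$ stated just above the proposition, and is the content of Hashizume's and Ishii's construction. Both sides of the identity are therefore solutions of $Hf=(\sum\nu_i^2)f$. By the result from \cite{boc} quoted before equation \eqref{fk-f}, for $\nu\in\Omega$ there is a unique solution $f$ with $e^{-\nu(x)}f(x)$ bounded and $\lim_{x\to+\infty}e^{-\nu(x)}f(x)=\prod_{i<j}\Gamma(\nu_i-\nu_j)$, namely $\psi_\nu$ itself (by the Corollary to the previous proposition). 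It is enough to verify the same asymptotic for the right-hand side.

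The key computation is the $x\to+\infty$ asymptotic of each $m_{-w\nu}(x)$. From the series \eqref{series}, the leading term is $a_{n,0}(-w\nu)e^{(w\nu)(x)}=\prod_{i<j}\Gamma(1-(w\nu)_i+(w\nu)_j)^{-1}\,e^{(w\nu)(x)}$, and all other summands carry an extra factor $e^{-(m',x)}$ with $m'$ a nonzero nonnegative integer combination of the simple roots $\alpha_i$, so they decay faster in the chamber. For $\nu\in\Omega$, $\nu-w\nu$ lies in the positive root cone with a strictly positive coefficient for at least one simple root whenever $w\ne e$, so $e^{-\nu(x)}e^{(w\nu)(x)}\to 0$ as $x\to+\infty$. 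Hence only the $w=e$ term contributes in the limit, giving
\begin{equation*}
\lim_{x\to+\infty}e^{-\nu(x)}\sum_{w\in S_n}(-1)^w m_{-w\nu}(x)=\prod_{i<j}\Gamma(1-\nu_i+\nu_j)^{-1}.
\end{equation*}
Multiplying by $\prod_{i<j}\pi/\sin\pi(\nu_i-\nu_j)$ and using the reflection formula $\Gamma(z)\Gamma(1-z)=\pi/\sin\pi z$ with $z=\nu_i-\nu_j$ gives exactly $\prod_{i<j}\Gamma(\nu_i-\nu_j)$, matching the asymptotic of $\psi_\nu$. A similar domination argument shows that $e^{-\nu(x)}$ times the right-hand side stays bounded on $\Omega$, so the uniqueness from~\cite{boc} applies and the identity holds on $\Omega$; analytic continuation in $\nu$ (with the poles of the sine factors cancelling the zeros produced by the alternating sum, since $\psi_\nu$ is entire) extends it to all of $\C^n$.

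The main obstacle is controlling the series $m_{-w\nu}$ carefully enough to justify the term-by-term asymptotic and the domination needed for uniqueness: one must check that the series \eqref{series} converges in a neighborhood of $+\infty$ in $\Omega$ and that the tail is uniformly subdominant to the leading exponential. The rest is bookkeeping with the reflection formula and a standard analytic-continuation argument to handle the removable singularities of the right-hand side at integer differences $\nu_i-\nu_j$.
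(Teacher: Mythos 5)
The paper's own ``proof'' of this proposition is just a citation: it says the formula comes from~\cite{boc}, with a dictionary for matching notation; the content is Hashizume's expansion of the class-one Whittaker function in terms of fundamental Whittaker functions, specialised to $\mathfrak{gl}_n$. Your proposal instead tries to derive the identity directly by matching eigenfunctions via the uniqueness theorem from~\cite{boc}. That is a legitimate and genuinely different route, and your computations of the symmetry in $\nu$, the eigenvalue equation for each $m_{-w\nu}$, the leading asymptotics, and the use of the reflection formula $\Gamma(z)\Gamma(1-z)=\pi/\sin\pi z$ to land on $\prod_{i<j}\Gamma(\nu_i-\nu_j)$ are all correct.

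The gap is in the boundedness step, and it is more serious than your closing paragraph suggests. The uniqueness result you invoke (the sentence in the paper preceding \eqref{fk-f}) requires that $e^{-\nu(x)}f(x)$ be bounded on all of $\R^n$, not merely on $\Omega$. On $\Omega$ your domination works, because for $\nu\in\Omega$ the exponentials $e^{-(m',x)-(\nu-w\nu)(x)}$ are all $\le 1$ there and one only needs $\sum_m|a_{n,m}|<\infty$. But for $\alpha_i(x)\to-\infty$ each $m_{-w\nu}(x)$ grows exponentially (already for $n=2$, $m_\nu$ is essentially $I_{\nu_1-\nu_2}$, which blows up where $K_{\nu_1-\nu_2}$ decays), so no single series is dominated; the boundedness of the alternating sum holds only because of delicate cancellations between the $n!$ terms. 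That cancellation is precisely the content of Hashizume's theorem and cannot be reduced to a ``similar domination argument'' or to checking convergence ``in a neighborhood of $+\infty$ in $\Omega$,'' which is how you describe the remaining work. As written, the proposal establishes the correct asymptotics at $+\infty$ but has not verified the global boundedness hypothesis needed to apply the uniqueness theorem, so the conclusion does not yet follow. A viable repair, closer to what~\cite{boc} and Hashizume actually do, is to argue instead that for generic $\nu$ the functions $\{m_{-w\nu}\}_{w\in S_n}$ form a basis of the eigenspace, expand $\psi_\nu$ in that basis, and then determine the coefficients from the $+\infty$ asymptotics and the $S_n$-symmetry of $\psi_\nu$ in $\nu$; this sidesteps the boundedness verification entirely.
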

\begin{proof} This comes from~\cite{boc}.
In the notation of that paper we are taking 
$\Pi= \{\a_i/2,\ i=1,\ldots,n-1\}$, $m(2\alpha)=0$, $|\eta_\alpha|^2=1$ and 
$\psi_\nu(x)=2^q k_\nu(x)$ where $q=n(n-1)/2$.  
\end{proof}

Now consider the function $\theta_t(x)$ defined by \eqref{theta}.
Note that we can write
$$s_n(\l)=\frac1{(2\pi \iota)^n n!} h(\l) \prod_{i>j}\frac{\sin \pi(\l_i-\l_j)}{\pi}.$$
\begin{cor}
$$\theta_t(x) = \frac1{(2\pi \iota)^n } \int_{\iota \R^n} m_\l(x) h(\l) e^{\sum_i \l_i^2 t/2} d\l .$$
\end{cor}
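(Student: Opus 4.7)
The plan is to substitute the Weyl-sum expansion of $\psi_{-\lambda}(x)$ from the preceding proposition into the defining integral \eqref{theta} for $\theta_t(x)$, use the alternative form of the Sklyanin measure given in the hint just before the corollary to cancel the $\sin\pi(\lambda_i-\lambda_j)$ factors, and then use a symmetrisation (change of variables $\lambda\mapsto w^{-1}\lambda$ in each Weyl-group summand) to collapse the sum over $S_n$ into a single integral, absorbing the $1/n!$ factor.

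More concretely, first I would write
\[
\psi_{-\lambda}(x)=\prod_{i<j}\frac{\pi}{\sin\pi(\lambda_j-\lambda_i)}\sum_{w\in S_n}(-1)^w m_{w\lambda}(x)
=(-1)^{n(n-1)/2}\prod_{i<j}\frac{\pi}{\sin\pi(\lambda_i-\lambda_j)}\sum_{w\in S_n}(-1)^w m_{w\lambda}(x),
\]
using that $\sin$ is odd. Substituting together with the identity
\[
s_n(\lambda)=\frac{1}{(2\pi\iota)^n n!}\,h(\lambda)\prod_{i>j}\frac{\sin\pi(\lambda_i-\lambda_j)}{\pi}
=\frac{(-1)^{n(n-1)/2}}{(2\pi\iota)^n n!}\,h(\lambda)\prod_{i<j}\frac{\sin\pi(\lambda_i-\lambda_j)}{\pi},
\]
all the sine factors and the two signs $(-1)^{n(n-1)/2}$ cancel, yielding
\[
\theta_t(x)=\frac{1}{(2\pi\iota)^n\,n!}\sum_{w\in S_n}(-1)^w\int_{\iota\R^n} m_{w\lambda}(x)\,h(\lambda)\,e^{\sum_i\lambda_i^2 t/2}\,d\lambda.
\]

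Next I would change variables $\mu=w\lambda$ in each integrand. The contour $\iota\R^n$, the Lebesgue measure $d\lambda$, and the quadratic $\sum_i \lambda_i^2$ are all $S_n$-invariant, while the Vandermonde transforms as $h(w^{-1}\mu)=(-1)^w h(\mu)$. Thus the sign $(-1)^w$ in front combines with the $(-1)^w$ from $h$ to give $+1$, and each of the $n!$ terms reduces to the same integral $\int m_\mu(x) h(\mu) e^{\sum \mu_i^2 t/2}\,d\mu$. The factor $n!$ then cancels the $1/n!$, leaving exactly the desired expression.

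The main technical obstacle is justifying the interchange of sum and integral and the contour change of variables; this should follow from the bound $|\psi_{\lambda+\nu}(x)|\le \psi_\nu(x)$ noted after Proposition 2 together with growth estimates on the fundamental Whittaker series $m_\lambda(x)$ (its summability being precisely what defines it), plus the Gaussian damping $e^{\sum_i\lambda_i^2 t/2}$ on the imaginary axis which makes everything absolutely convergent. The only other point requiring care is the bookkeeping of signs from $\sin\pi(\lambda_j-\lambda_i)=-\sin\pi(\lambda_i-\lambda_j)$ and from the Weyl action on $h$, but these conspire cleanly as shown.
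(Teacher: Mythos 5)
Your proof is correct and follows the route the paper implicitly intends: the paper states the corollary without a proof, having just supplied the two ingredients (the Weyl‑sum expansion of $\psi_\nu$ and the alternative form of the Sklyanin measure $s_n$) precisely so that the reader performs the substitution, sine cancellation, and $S_n$‑symmetrisation you carry out. All sign bookkeeping is right: the two factors $(-1)^{n(n-1)/2}$ cancel, and in the change of variables $\mu=w\lambda$ the sign $(-1)^w$ from $h(w^{-1}\mu)=(-1)^w h(\mu)$ (using $\sgn(w^{-1})=\sgn(w)$) cancels the $(-1)^w$ from the Weyl sum, so the $n!$ identical terms cancel the $1/n!$.
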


\section{Relativistic Toda and $q$-deformed Whittaker functions}

The algebraic structure underlying Theorem~\ref{main} is an intertwining relation
between certain differential operators associated with the open quantum Toda chain
with $n$ particles. This
structure should carry over to the setting of Ruijsenaars' relativistic Toda difference 
operators and $q$-deformed Whittaker functions~\cite{ru,ru1,et,g-q}.
A recent (related, but different) development along these lines is given in~\cite{bc}.  
We will describe here the $q$-analogue of Theorem~\ref{main} in the rank one case, 
which corresponds to $n=2$.  

In the case $n=2$, the Whittaker function is given by
$$\psi_\l(x) = 2 \exp\left(\frac12(\l_1+\l_2)(x_1+x_2)\right)
K_{\l_1-\l_2}\left( 2 e^{(x_2-x_1)/2}\right),$$
where $K_\nu(z)$ is the Macdonald function.
In this case, Theorem~\ref{main} is equivalent to the
following theorem of Matsumoto and Yor~\cite{my}.
\begin{thm}\label{my}
\begin{enumerate}
\item
Let $(B^{(\mu)}_t, t\ge 0)$ be a Brownian motion with drift $\mu$, 
and define $$Z^{(\mu)}_t=\int_0^t e^{2B^{(\mu)}_s-B^{(\mu)}_t}ds.$$
Then $\log Z^{(\mu)}$ is a diffusion process with infinitesimal generator
$$\frac12 \frac{d^2}{dx^2}+\left(\frac{d}{dx}\log K_\mu(e^{-x})\right) \frac{d}{dx}.$$
\item The conditional law of $B^{(\mu)}_t$, given $\{Z^{(\mu)}_s,\ s\le t; Z^{(\mu)}_t=z\}$,
is given by the generalized inverse Gaussian distribution
$$ \frac12 K_\mu(1/z)^{-1} e^{\mu x} \exp\left( - \cosh(x)/z \right) dx .$$
\end{enumerate}
\end{thm}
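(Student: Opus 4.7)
The plan is to derive this as the $n=2$ specialisation of Theorem~\ref{main}, using the explicit formulas from the preceding $n=2$ subsection. In the case $n=2$ one has
$\psi_\mu(x_1,x_2)=2\exp(\tfrac12(\mu_1+\mu_2)(x_1+x_2))K_{\mu_1-\mu_2}(2e^{(x_2-x_1)/2})$, which factors as a function of $S=x_1+x_2$ times a function of $D=x_2-x_1$, and $T_{w_0}\eta(t)=(\eta^1_t+\log u_t,\eta^2_t-\log u_t)$ with $u_t=\int_0^t e^{\eta^2_s-\eta^1_s}\,ds$. By Theorem~\ref{main}, $T_{w_0}\eta$ is a $2$-dimensional diffusion with generator $\L_\mu$, and since $\log\psi_\mu$ is additive in $S$ and $D$, the generator decouples in the $(S,D)$ coordinates: $S_t=\eta^1_t+\eta^2_t$ is a free Brownian motion with drift $\mu_1+\mu_2$, while $D_t=(\eta^2_t-\eta^1_t)-2\log u_t$ is an autonomous $1$-dimensional diffusion. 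This Markovianity of $D$ in its own filtration is exactly the nontrivial content of part~(1) of MY.

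For part~(1), I would relate Matsumoto--Yor's standard Brownian motion $B^{(\mu)}$ with drift $\mu=\mu_2-\mu_1$ to $(\eta^2-\eta^1)/\sqrt 2$ via a time change by factor $2$, under which $u_t$ corresponds to $2\int_0^{t/2} e^{2B^{(\mu)}_s}\,ds$ and a short computation gives $\log Z^{(\mu)}_{t/2}=-D_t/2-\log 2$. The generator induced by $\L_\mu$ on the $D$-coordinate is $\partial_D^2+2(\log g)'(D)\,\partial_D$ with $g(D)=K_{\mu_1-\mu_2}(2e^{D/2})$; under the substitution $x=-D/2-\log 2$ (so that $2e^{D/2}=e^{-x}$) and the compensating time rescaling, this becomes $\tfrac12\partial_x^2+(\log K_{\mu_1-\mu_2}(e^{-x}))'\partial_x$, which matches the MY generator thanks to $K_\nu=K_{-\nu}$.

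For part~(2), I would apply the conditional-law statement in Theorem~\ref{main}: conditionally on $T_{w_0}\eta(t)=x$, the vector $\eta(t)$ has distribution $\gamma^x_\mu$, characterised by $\int e^{\lambda\cdot y}\gamma^x_\mu(dy)=\psi_{\mu+\lambda}(x)/\psi_\mu(x)$. Since $\eta^1_t+\eta^2_t$ is determined by $T_{w_0}\eta(t)$, $\gamma^x_\mu$ concentrates on an anti-diagonal and reduces to the conditional law of $B^{(\mu)}_t$ (equivalently of $\eta^2_t-\eta^1_t$). Its moment generating function becomes the ratio $K_{\mu_1-\mu_2+\lambda_1-\lambda_2}(2e^{(x_2-x_1)/2})/K_{\mu_1-\mu_2}(2e^{(x_2-x_1)/2})$ of Macdonald functions, which I would invert using the representation $K_\nu(2\sqrt{ab})=\tfrac12(a/b)^{\nu/2}\int_0^\infty t^{\nu-1}e^{-at-b/t}\,dt$ to identify the generalised inverse Gaussian density stated in the theorem.

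The main obstacle is bookkeeping: tracking the change of coordinates $(x_1,x_2)\mapsto(S,D)$, the substitution $D\mapsto x$, and the time/scale change needed to pass from the two-dimensional drift $(\mu_1,\mu_2)$ of $\eta$ to Matsumoto--Yor's single one-dimensional drift $\mu$, so that both the generator and the conditional distribution land in exactly the normalisation used in MY. The analytic facts that make the verifications clean are the modified Bessel ODE $K_\nu''(z)+z^{-1}K_\nu'(z)-(1+\nu^2/z^2)K_\nu(z)=0$ and the Macdonald integral representation above.
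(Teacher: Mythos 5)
Your route is exactly the one the paper indicates: the paper offers no proof of Theorem~\ref{my}, it simply cites Matsumoto--Yor and asserts that the statement is the $n=2$ specialisation of Theorem~\ref{main}, and your proposal fills in the details of that equivalence correctly. One small bookkeeping correction: the scaling relation should be $B^{(\mu)}_s=(\eta^2_{2s}-\eta^1_{2s})/2$ rather than involving $\sqrt 2$ (so that $2B^{(\mu)}_s=\eta^2_{2s}-\eta^1_{2s}$, whence $\int_0^s e^{2B^{(\mu)}_r}dr=\tfrac12 u_{2s}$ and $\log Z^{(\mu)}_{t/2}=-D_t/2-\log 2$ as you claim, with $1/z=2e^{(x_2-x_1)/2}$ feeding directly into both the generator computation and the Macdonald-function ratio for the conditional MGF).
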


Let $0\le q<1$.  Denote the $q$-Pochhammer symbol by 
$(q)_n=(q;q)_n=(1-q)\cdots (1-q^n)$ with the conventions
that $(q)_0=1$ and $(0)_n=1$.  In what follows we also adopt the
convention that $0^0=1$. 

For $\lambda\in\C$ and $z\ge 0$, define
$$\psi_\lambda(z)=\sum_{y=0}^z \frac{q^{\lambda(2y-z)}}{(q)_y(q)_{z-y}}.$$
This is a $q$-deformed Whittaker function associated with 
$\mathfrak{sl}_2$~\cite{g-q}.  It satisfies the difference equation
$$(1-q^{z+1})\psi_\l(z+1)+\psi_\l(z-1)=(q^\l+q^{-\l})\psi_\l(z)$$
where we set $\psi_\l(-1)=0$, and is related to the $q$-Hermite polynomials by
$$ (q)_z \psi_\l(z)= H_z\left(\left.\frac{q^\l+q^{-\l}}2\right| q\right).$$

Fix $0\le q<1$, $0\le p\le 1$ and let $(Y_n,Z_n)_{n\ge 0}$ be a Markov chain with 
state space $\{(y,z)\in\Z^2: z\ge y\ge 0\}$ and transition probabilities given by
$$\Pi((y,z),(y+1,z+1))=p,\qquad
\Pi((y,z),(y,z+1))=(1-p)q^y,$$
$$\Pi((y,z),(y-1,z-1))=(1-p)(1-q^y).$$
Note that $Y$ is itself a Markov chain with transition probabilities
$$P(y,y+1)=p,\quad P(y,y)=(1-p)q^y,\quad P(y,y-1)=(1-p)(1-q^y),$$
and $X=2Y-Z$ is a simple random walk on the integers which 
increases by one with probability $p$ and decreases by one with probability $1-p$.
Choose $\nu\in\R$ such that $p=q^\nu/(q^\nu+q^{-\nu})$.
\begin{thm}\label{q-MY} Let $Y_0=Z_0=0$.
The process $(Z_n,n\ge 0)$ is a Markov chain with transition probabilities
$$Q(z,z+1)=\frac{1-q^{z+1}}{q^\nu+q^{-\nu}}\frac{\psi_\nu(z+1)}{\psi_\nu(z)},\qquad
Q(z,z-1)=\frac{1}{q^\nu+q^{-\nu}}\frac{\psi_\nu(z-1)}{\psi_\nu(z)}.$$
Moreover, for each $n\ge 0$, the conditional distribution of $Y_n$, given $\sigma\{Z_m,m\le n\}$
and $Z_n=z$, is given by
$$\pi_z(y)=\psi_\nu(z)^{-1}  \frac{q^{\nu(2y-z)}}{(q)_y(q)_{z-y}} ,\qquad y=0,1,\ldots,z.$$
\end{thm}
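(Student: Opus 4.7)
The plan is to apply a discrete Markov-functions (Rogers--Pitman) argument to the pair $(Y_n,Z_n)$. Define the candidate conditional kernel $\Lambda(z,y)=\pi_z(y)$, supported on $0\le y\le z$. To deduce simultaneously that $Z$ is Markov with transition $Q$ and that the conditional law of $Y_n$ given $\sigma\{Z_m,m\le n\}$ is $\pi_{Z_n}$, it suffices to verify the intertwining
$$\sum_{y'}\Lambda(z,y')\,\Pi\bigl((y',z),(y'',z'')\bigr)=Q(z,z'')\,\Lambda(z'',y''),$$
together with the initial condition $Y_0\sim\pi_{Z_0}$, which is immediate since $Y_0=Z_0=0$ and $\pi_0(0)=1$ (note $\psi_\nu(0)=1$).

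Two cases arise, according to whether $z''=z+1$ or $z''=z-1$. In the up-step, only $(y''-1,z)$ and $(y'',z)$ contribute, so the identity to check is
$$\Lambda(z,y''-1)\,p+\Lambda(z,y'')\,(1-p)q^{y''}=Q(z,z+1)\,\Lambda(z+1,y''),$$
while in the down-step only $(y''+1,z)$ contributes, giving
$$\Lambda(z,y''+1)\,(1-p)(1-q^{y''+1})=Q(z,z-1)\,\Lambda(z-1,y'').$$
First I would substitute the explicit form of $\pi_z$, factor out a common ratio of $q$-Pochhammer symbols using $(q)_{y+1}=(q)_y(1-q^{y+1})$, and insert $p=q^\nu/(q^\nu+q^{-\nu})$. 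The down-step then collapses directly to the stated formula for $Q(z,z-1)$. In the up-step the dependence on $y''$ disappears via the elementary identity
$$(1-q^{y''})+q^{y''}(1-q^{z-y''+1})=1-q^{z+1},$$
leaving precisely the stated formula for $Q(z,z+1)$.

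The only nontrivial input, and hence the main obstacle, is to confirm that the ratios thus obtained genuinely define a probability transition, i.e.\ that $Q(z,z+1)+Q(z,z-1)=1$. This is exactly the three-term recurrence
$$(1-q^{z+1})\psi_\nu(z+1)+\psi_\nu(z-1)=(q^\nu+q^{-\nu})\psi_\nu(z)$$
already recorded in the paragraph preceding the theorem. I would verify it directly from the series definition of $\psi_\nu$ by splitting the sum for $\psi_\nu(z+1)$ according to whether $y\le z$ or $y=z+1$, re-indexing the first piece, and combining with $\psi_\nu(z-1)$ term by term; the telescoping again rests on the identity displayed above. Once this recurrence is in hand, the intertwining holds and both assertions of the theorem follow simultaneously, with the conditional law of $Y_n$ identified as $\pi_{Z_n}$ and $Z$ identified as a Markov chain with transition $Q$.
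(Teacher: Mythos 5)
Your proof is correct and takes essentially the same route as the paper: the paper likewise invokes the Markov-functions theorem and reduces everything to the intertwining $QK=K\Pi$ with $K(z,(y,z'))=\delta_{z,z'}\pi_z(y)$, declaring that relation "readily verified" (which you have simply spelled out). One small remark: the normalization $Q(z,z+1)+Q(z,z-1)=1$ comes for free from the intertwining and the fact that $\pi_z$ is a probability measure, so the three-term recurrence for $\psi_\nu$ is a consequence rather than a prerequisite.
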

The proof is straightforward using the theory of Markov functions, by which it suffices to check
that the transition operators $\Pi$ and $Q$ satisfy the intertwining relation $QK=K\Pi$ where
$$K(z,(y,z'))= \frac{\delta_{z,z'}q^{\nu(2y-z)}}{\psi_\nu(z)(q)_y(q)_{z-y}}.$$
This intertwining relation is readily verified.
When $q=0$ and $\nu=0$, $\psi_\nu(z)=z$ and the above theorem can be interpreted
as the discrete version of Pitman's `$2M-X$' theorem, which states that if $X_n$ is a
simple symmetric random walk and $M_n=\max_{m\le n} X_m$, then $2M-X$ is a
Markov chain with transition probabilities
$Q(z,z+1)=(z+1)/2z,\ Q(z,z-1)=(z-1)/2z$.  When $q\to 1$, it should rescale to Theorem~\ref{my}.

The analogue of the output/Burke theorem in the setting of Theorem~\ref{q-MY} 
is the following.  If $p<1/2$,
then the Markov chain $Y$ has a stationary distribution.  If $Y_0$ is chosen according
to this distribution and $Z_0=0$, the process $(Z_n,n\ge 0)$ is a simple random walk 
on the integers which increases by one with probability $p$ and decreases by one 
with probability $1-p$.

\bigskip

\noindent {\bf Acknowledgements.}  Many thanks to Nikos Zygouras and an anonymous
referee for careful reading of the manuscript and for valuable suggestions which 
lead to a much improved version.  Thanks also to Alexei Borodin for helpful 
discussions regarding Proposition 3.  
This review started out as draft lecture notes for the 
summer school {\em Random matrix theory and applications} held at the Indian Institute 
of Science, Bangalore, in January, 2012.  Unfortunately, I didn't make it to Bangalore 
(for bureaucratic reasons) but I am very grateful to the organisers for all their efforts.  
Research supported in part by EPSRC grant EP/I014829/1.

\end{document}